\newtheorem{lemma}{Lemma}[section]
\newtheorem{proposition}[lemma]{Proposition}
\newtheorem{theorem}[lemma]{Theorem}
\newtheorem{example}[lemma]{Example}
\numberwithin{equation}{section}
\newcommand{\ud}{\mathrm{d}}
\newcommand{\f}{\frac}
\newcommand{\xx}{|x|^2}
\newcommand{\yy}{|y|^2}
\newcommand{\xy}{\langle x,y\rangle}
\newcommand{\pppp}[4]%
  {\frac{\partial^3{#1}}{\partial{#2}\partial{#3}\partial{#4}}}
\newcommand{\p}{\phi}
\newcommand{\pt}{\phi_2}
\newcommand{\ptt}{\phi_{22}}
\newcommand{\pot}{\phi_{12}}
\newcommand{\sq}{\frac{(\alpha+\beta)^2}{\alpha}}
\renewcommand{\a}{\alpha}
\renewcommand{\b}{\beta}
\newcommand{\ab}{(\alpha,\beta)}
\newcommand{\ta}{\tilde\alpha}
\newcommand{\tb}{\tilde\beta}
\newcommand{\ha}{\hat\alpha}
\newcommand{\hb}{\hat\beta}
\newcommand{\ba}{\bar\alpha}
\newcommand{\bb}{\bar\beta}
\newcommand{\aij}{a_{ij}}
\newcommand{\bij}{b_{i|j}}
\newcommand{\baij}{\bar a_{ij}}
\newcommand{\bbij}{\bar b_{i|j}}
\newcommand{\bi}{b_i}
\newcommand{\bj}{b_j}
\newcommand{\bbi}{\bar b_i}
\newcommand{\bbj}{\bar b_j}
\newcommand{\rij}{r_{ij}}
\newcommand{\brij}{\bar r_{ij}}
\newcommand{\roo}{r_{00}}
\newcommand{\ri}{r_i}
\newcommand{\rj}{r_j}
\newcommand{\ro}{r_0}
\newcommand{\sij}{s_{ij}}
\newcommand{\bsij}{\bar s_{ij}}
\newcommand{\si}{s_i}
\newcommand{\sj}{s_j}
\newcommand{\bsi}{\bar s_i}
\newcommand{\bsj}{\bar s_j}
\newcommand{\so}{s_0}
\newcommand{\po}{p_0}
\newcommand{\Rat}{\mathfrak{Rat}}
\newcommand{\Irrat}{\mathfrak{Irrat}}
\newcommand{\ssq}{\frac{(b\alpha+\beta)^2}{\alpha}}
\newcommand{\Rmnum}[1]{\expandafter\@slowromancap\romannumeral #1@}
\begin{document}
\title{On singular square metrics with vanishing Douglas curvature}
\footnotetext{\emph{Keywords}:Finsler geometry, general $\ab$-metric, Douglas curvature,  $\b$-deformation, conformal $1$-form.
\\
\emph{Mathematics Subject Classification}: 53B40, 53C60.}

\author{Changtao Yu and Hongmei Zhu}
\date{2016.~11.~1}
\maketitle

\begin{abstract}
Square metrics $F=\sq$ are a special class of Finsler metrics. It is the rate kind of metric category to be of excellent geometrical properties. In this paper, we discuss the so-called singular square metrics $F=\ssq$. A characterization for such metrics to be of vanishing Douglas curvature is provided. Moreover, many analytical examples are achieved by using a special kinds of metrical deformations called $\b$-deformations.
\end{abstract}

\section{Introduction}
A (regular) Finsler metric $F$ on a manifold $M$ is a homogeneous continuous function $F:TM\rightarrow[0,+\infty)$ where $F$ is smooth  on the slit tangent bundle $TM_o$ satisfying nonnegativity~($F(y)>0$ for any $y\neq0$) and strong convexity~(the fundamental tensor $g_{ij}:=[\frac{1}{2}F^2]_{y^iy^j}$ is positive definite on $TM_o$). Here $(x^i,y^i)$ denote the natural system of coordinates of $TM$.

In 1929, L.~Berwald constructed the following famous Finsler metric\cite{Be}
\begin{eqnarray*}
F=\f{(\sqrt{(1-|x|^2)|y|^2+\langle x,y\rangle^2}+\langle x,y\rangle)^2}{(1-|x|^2)^2\sqrt{(1-|x|^2)|y|^2+\langle x,y\rangle^2}}.
\end{eqnarray*}
This metric, defined on the unit ball $\mathbb B^n(1)$ with all the straight line segments as its geodesics, has constant flag curvature $K=0$. In a modern point of view, Berwald's metric belongs to a special kind of Finsler metrics called Berwald type metrics or square metrics given as the form
\begin{eqnarray}\label{sq}
F=\sq,
\end{eqnarray}
where $\a$ is a Riemannian metric and $\b$ is a $1$-form\cite{szm-yct-oesm}. It is known that (\ref{sq}) is a regular Finsler metric if and only if the length of $\b$ with respect to $\a$, denoted by $b$, satisfies $b<1$.

The aim of this paper is to study the following kind of Finlser metrics,
\begin{eqnarray}\label{ssq}
F=\ssq.
\end{eqnarray}
(\ref{ssq})  corresponds to (\ref{sq}) with $b=1$ just by taking $\ba=b^2\a$ and $\bb=b\b$~(in this case, $F=\frac{(\ba+\bb)^2}{\ba}$ with $\bar b\equiv1$). See Proposition \ref{imineiigadiing} and the related discussions below it for details. (\ref{ssq}) are not regular obviously, but the singularity is very weakly. Actually, by (\ref{ppp}) and (\ref{ppp1}) one can see that the singularity shows up as that $F(y)$ takes the value zero along a single direction determined by $b\a+\b=0$ and $(g_{ij})$ as a matrix is degenerate on two directions determined by $b\a\pm\b=0$. Hence, we will called them {\emph singular square metrics}. Figure 1 and Figure 2 show the difference between the indicatriaxs of regular and singular square metrics.

\begin{figure}[h]
 \begin{minipage}[t]{0.5\linewidth}
  \centering
  \includegraphics[scale=0.7]{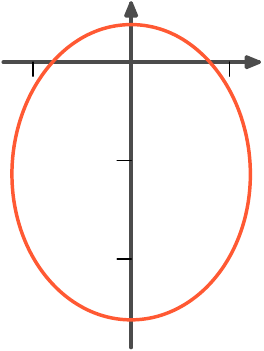}
  \caption{Indicatrix of square metrics with $b<1$}\label{ea}
  \end{minipage}%
\begin{minipage}[t]{0.5\linewidth}
  \centering
   \includegraphics[scale=0.7]{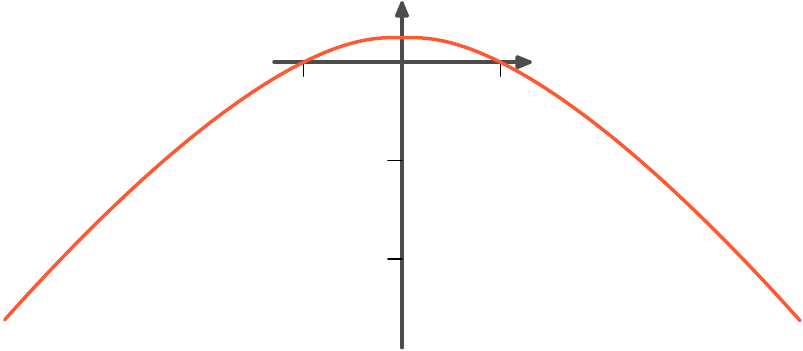}
  \caption{Indicatrix of square metrics with $b=1$}\label{eb}
  \end{minipage}
\end{figure}

Square metrics form a rare kind of Finsler metrics to be of great geometric properties. Recently, E. Sevim et al. studied Einstein $\ab$-metrics~(a special Finsler metrical category in which all the metrics are determined by a Riemannian metric $\a$ and a $1$-form $\b$ just like square metrics) of Douglas type, and proved that such Finsler metrics must be either a Randers metric or a square metric\cite{SSZ}.  Later on, Z. Shen and the first author provided a concise characterization for Einstein square metric\cite{szm-yct-oesm}.

Recently, the authors study general $\ab$-metrics~(a larger metrical category including $\ab$-metrics), and we accidentally find that except for regular square metrics, singular square metrics are also of wonderful Riemann curvature properties\cite{yct-zhm-pfga}. This motive us to study the non-Riemann curvature properties of singular square metrics.

In this paper, we will focus on Douglas curvature, a typical non-Riemann curvature for Finsler metrics introduced by J. Douglas in 1927\cite{D,BM}. We provide a characterization for singular square metrics to be of vanishing Douglas curvature~(such metrics are called {\emph Douglag metrics}) below.

\begin{theorem}\label{characterizationforssqD}
Let $F=\frac{(b\a+\b)^2}{\a}$ be a singular square metric on a $n$-dimensional manifold $M$ with $n\geq3$. Then $F$ is a Douglas metric if and only if $\a$ and $\b$ satisfy
\begin{eqnarray}
r_{ij}&=&c(x) a_{ij}+d(x) b_{i}b_{j}-\frac{3}{b^2}(s_{i}b_{j}+s_{j}b_{i}),\label{ssqrij}\\
s_{ij}&=&\frac{1}{b^2}(b_{i}s_{j}-b_{j}s_{i}).\label{ssqsij}
\end{eqnarray}
\end{theorem}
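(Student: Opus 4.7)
The plan is to derive the Douglas condition directly from the spray-coefficient formula for general $\ab$-metrics, treating the singular square metric $F=\ssq$ as a general $\ab$-metric with $\phi(b^2,s)=(b+s)^2$, and then splitting the resulting expression into rational and irrational parts with respect to $\a$.

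First I would recall that $F$ is a Douglas metric if and only if the difference $G^i-\G$ of spray coefficients can be written as $T^i_{jk}(x)\,y^jy^k+P(x,y)y^i$ for some function $P$ homogeneous of degree one in $y$; since $\G$ is already quadratic in $y$, this is precisely the vanishing of the Douglas tensor. Applying the general $\ab$-metric spray formula to $\phi(b^2,s)=(b+s)^2$ (see \cite{yct-zhm-pfga}), I expect an expression of the shape
\begin{eqnarray*}
G^i-\G=\a\,\Theta_1\bI+\Theta_2 y^i+\Theta_3\sIo+\a\,\Theta_4\rIo+\cdots,
\end{eqnarray*}
where each coefficient is rational in $s=\b/\a$ and linear in $\roo,\so,\ro$.

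Next, after clearing the common denominator (a power of $b\a+\b$, reflecting the singularity of $F$), the leftover splits by parity in $\a$ into a rational part $\Rat^i$ and an irrational part $\a\,\Irrat^i$. Because $\a=\sqrt{\aij y^iy^j}$ is algebraically independent of rational functions of $y$, the Douglas condition forces $\Rat^i$ and $\a\,\Irrat^i$ separately to be quadratic in $y$ modulo multiples of $y^i$. The rational piece yields a polynomial identity in $y$ involving $\rij$; contracting and using $n\geq3$ to resolve the resulting trace equations, I expect to read off (\ref{ssqrij}), with $c(x)$ and $d(x)$ coming from the $\aij$- and $\bi\bj$-traces and the $\si\bj$-correction forced by the off-diagonal balance. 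The irrational piece, which carries the antisymmetric tensor $\sij$, should similarly produce (\ref{ssqsij}) after another contraction. For the converse direction, I would substitute (\ref{ssqrij}) and (\ref{ssqsij}) back into the spray formula and verify by direct computation that every term rearranges into the required quadratic$+Py^i$ shape.

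The main obstacle will be the bookkeeping of the rational/irrational separation when $b=\|\b\|_\a$ is not constant. The factor $b\a+\b$ appears both in the numerator of $F$ and, through the fundamental tensor $\gij$, in the denominators of the spray coefficients, so the splitting interacts nontrivially with $\ud b$ and with the $\sj,\rj$ pieces of $\bij$. I also expect the hypothesis $n\geq3$ to enter precisely when solving for $c(x)$ and $d(x)$ from the trace equations, since in dimension two the relevant tensor identities degenerate and one cannot isolate these coefficients uniquely.
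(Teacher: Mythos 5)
Your overall strategy---compute the spray via the general $\ab$-metric formula, impose the Douglas condition, and separate the resulting identity into a rational part and an irrational part in $\a$---is exactly how the paper begins. But two essential ideas are missing, and without them the plan does not close. First, the Douglas condition (\ref{31}) carries the unknown tensor $T^i_{kl}=\Gamma^i_{kl}-\gamma^i_{kl}$, and this unknown appears in \emph{both} pieces $\Rat$ and $\Irrat$; likewise both pieces mix $\roo$, $\ro$, $\so$ and $\sIo$. So you cannot ``read off'' (\ref{ssqrij}) from the rational piece and (\ref{ssqsij}) from the irrational piece by contractions alone. The paper first eliminates $T^i_{kl}$ by forming the combination $b^2\Rat+\b\,\Irrat=0$, i.e.\ (\ref{S2}), then contracts with $y_i$ and $b_j$ to obtain $\so=\frac{1}{3b^2}(\b r-b^2\ro)$ and hence (\ref{S10}), which is (\ref{ssqsij}). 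Only after substituting these relations back does $\Irrat=0$ collapse to the single divisibility identity (\ref{S11}).

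Second, you misplace the role of $n\geq3$. It is not used to ``solve trace equations for $c$ and $d$'': in dimension two every symmetric $2$-tensor can already be written as $c\,\aij+d\,\bi\bj+\theta_i\bj+\theta_j\bi$, so nothing is obstructed at that stage. The hypothesis enters through the algebraic fact that the quadratic form $b^2\a^2-\b^2$ is an \emph{irreducible} polynomial in $y$ precisely because the rank of $b^2\aij-\bi\bj$ is $n-1\geq2$; consequently it cannot be divided by $\a^2$ or by the linear forms $b^iy^j-b^jy^i$, and (\ref{S11}) forces $b^2\roo-2\b\ro+\a^2 r=\sigma(x)(b^2\a^2-\b^2)$, which is the actual source of (\ref{S13}), (\ref{S14}) and hence of (\ref{ssqrij}). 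For $n=2$ this fails (take $\a=\sqrt{u^2+v^2}$, $\b=vx-uy$, where $b^1y^2-b^2y^1$ divides $b^2\a^2-\b^2$), which is exactly why the theorem is stated for $n\geq3$. Your converse direction (substitute (\ref{ssqrij}) and (\ref{ssqsij}) into (\ref{Gi}) and verify the form (\ref{Dou})) is correct and coincides with the paper's.
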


The meaning of the related symbols can be found in Section 2.

We guess that the ratio $c(x):d(x)$ depends only on $b$. It's true for all the regular Douglas $\ab$-metrics including regular square metrics\cite{yct-dmab}. But we have no idea how to proof it at this moment. It is important. Because if it is true, then the following theorem will be the classification result for singular Douglas square metrics, which is the summarization of Lemma \ref{ieminignienigss}, Proposition \ref{meimeinignni}, Lemma \ref{iemimaubgeg} and Proposition \ref{iwienignubhd}.
\begin{theorem}\label{Douglasssq}
Assume $c(x):d(x)$ depends only on $b$. Then a singular square metric (\ref{ssq}) is a Douglas metric if and only if it can be reduced to one of the following cases:
\begin{enumerate}[(i)]
\item $\a$ and $\b$ can be expressed as
\begin{eqnarray*}
\a=\sqrt{\ba^2+(\bar b^{-8}-\bar b^{-2})\bb^2},\qquad\b=\bar b^{-3}\bb,
\end{eqnarray*}
where the Riemiannian metric $\ba$ and the $1$-form $\bb$ satisfy
\begin{eqnarray}
\brij=\bar\tau\baij,\qquad\bsij=\f{1}{\bar b^2}(\bbi\bsj-\bbj\bsi).\label{specialoneformssqDa}
\end{eqnarray}
In this case, $\bar b=b$ and $F$ is reexpressed as
\begin{eqnarray}\label{iemiingiiiwa}
F=\f{(\sqrt{\bar b^6(\bar b^2\ba^2-\bb^2)+\bb^2}+\bb)^2}{\bar b^2\sqrt{\bar b^6(\bar b^2\ba^2-\bb^2)+\bb^2}};
\end{eqnarray}
\item $\a$ and $\b$ can be expressed as
\begin{eqnarray*}
\a=\bar b^{-3}\ba,\qquad\b=\bar b^{-3}\bb
\end{eqnarray*}
where the Riemiannian metric $\ba$ and the $1$-form $\bb$ satisfy
\begin{eqnarray*}
\brij=\bar\tau(\bar b^2\baij-\bbi\bbj),\qquad\bsij=\f{1}{\bar b^2}(\bbi\bsj-\bbj\bsi).\label{specialoneformssqDb}
\end{eqnarray*}
In this case, $\bar b=b$ and $F$ is reexpressed as
\begin{eqnarray}\label{eimiignianieng}
F=\f{(\bar b\ba+\bb)^2}{\bar b^3\ba}.
\end{eqnarray}
\end{enumerate}
\end{theorem}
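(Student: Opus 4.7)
The plan is to obtain Theorem~\ref{Douglasssq} as the synthesis of the four preparatory results (Lemma~\ref{ieminignienigss}, Proposition~\ref{meimeinignni}, Lemma~\ref{iemimaubgeg}, Proposition~\ref{iwienignubhd}) by combining the pointwise characterization in Theorem~\ref{characterizationforssqD} with the technique of $\b$-deformations. The starting observation is that, with the extra assumption that $c(x):d(x)$ depends only on $b$, the factors $c(x)$ and $d(x)$ in (\ref{ssqrij}) can be written as $c(x)=c_0(b)\lambda(x)$ and $d(x)=d_0(b)\lambda(x)$ for suitable $c_0,d_0$ and a common $x$-dependent conformal factor $\lambda(x)$. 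This separation of $b$-dependence from $x$-dependence is exactly what a $\b$-deformation can detect and normalize.

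First I would set up an algebraic $\b$-deformation sending $(\a,\b)$ to a new pair $(\ba,\bb)$ of the type $\ba^{2}=u(b)\a^{2}+v(b)\b^{2}$ and $\bb=w(b)\b$, and then compute how the curvature data $r_{ij}$, $s_{ij}$ and the length $b$ transform; the key fact to check is that $\bar b=b$ in both claimed cases, so the deformation remains in the singular-square category. Next I would translate the system (\ref{ssqrij})--(\ref{ssqsij}) into the corresponding system for $\brij$ and $\bsij$. The assumption $c:d=c_0(b):d_0(b)$ gives exactly two admissible normalizations: one choice of $(u,v,w)$ annihilates the $d(x)b_ib_j$ coefficient, producing $\brij=\bar\tau\,\baij$ together with $\bsij=\frac{1}{\bar b^{2}}(\bbi\bsj-\bbj\bsi)$; the alternative choice produces $\brij=\bar\tau(\bar b^{2}\baij-\bbi\bbj)$ with the same $\bsij$ equation. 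These two alternatives are precisely cases (i) and (ii).

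The lemmas (\ref{ieminignienigss}) and (\ref{iemimaubgeg}) would be invoked to carry out the necessity direction: given a singular Douglas square metric, they produce the $\b$-deformation reaching the normal form and identify the explicit substitution needed to recover the original $F$. The propositions (\ref{meimeinignni}) and (\ref{iwienignubhd}) would be invoked for the sufficiency: starting from the deformed pair $(\ba,\bb)$ satisfying (\ref{specialoneformssqDa}) or its analogue, I substitute back and verify that the resulting $F$ has the explicit form (\ref{iemiingiiiwa}) or (\ref{eimiignianieng}), and that (\ref{ssqrij})--(\ref{ssqsij}) are satisfied so that Theorem~\ref{characterizationforssqD} applies.

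The main obstacle is the bookkeeping in the $\b$-deformation: tracking how the covariant derivatives $\bij$ transform under the nonlinear change $(u(b),v(b),w(b))$, how $\brij$ and $\bsij$ split under such a change, and then verifying that the dichotomy in $c_0(b):d_0(b)$ really collapses to precisely the two branches displayed in (i) and (ii) with no further residual case. Once the deformation rules are pinned down, matching the coefficients of $\a^{2}$ and $\b^{2}$ in the deformed curvature expression isolates the algebraic identities defining the two normal forms, and the explicit reexpressions (\ref{iemiingiiiwa}) and (\ref{eimiignianieng}) follow by direct substitution.
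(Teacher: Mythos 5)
Your overall strategy --- combine Theorem \ref{characterizationforssqD} with $\b$-deformations and assemble the four preparatory results --- is the same as the paper's, which explicitly presents Theorem \ref{Douglasssq} as the summarization of Lemma \ref{ieminignienigss}, Proposition \ref{meimeinignni}, Lemma \ref{iemimaubgeg} and Proposition \ref{iwienignubhd}. But there is a genuine gap in how you account for the two branches. You describe cases (i) and (ii) as ``two admissible normalizations'' available as alternative choices of the deformation factors $(u,v,w)$, ``precisely cases (i) and (ii).'' That is not what happens: the dichotomy is forced by the metric itself, according to whether $c+b^{2}d\neq0$ or $c+b^{2}d=0$. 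When $c+b^{2}d\neq0$, Lemma \ref{ieminignienigss} (whose deformation factor $\rho=\frac{1}{3}\int\frac{d}{c+b^{2}d}\,\ud b^{2}$ is well defined precisely because $c:d$ depends only on $b$ and because $c+b^{2}d$ does not vanish) first kills $\bar d$, and then Proposition \ref{meimeinignni} reaches the conformal normal form $\brij=\bar\tau\baij$. When $c+b^{2}d=0$, the conformal normal form is \emph{unreachable} --- as the paper notes after Lemma \ref{iemimaubgeg}, one can only dispel the term $\bi\sj+\bj\si$, landing in $\brij=\bar\tau(\bar b^{2}\baij-\bbi\bbj)$. Without isolating this dichotomy your argument cannot explain why the classification has exactly two disjoint cases rather than one, and the ``main obstacle'' you defer (that the ratio $c_{0}:d_{0}$ ``collapses to precisely the two branches'') is in fact the entire content of Sections 5 and 6, not bookkeeping.

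A second, smaller error: you assert that $\bar b=b$ guarantees that ``the deformation remains in the singular-square category.'' It does not. By Lemma \ref{Le1} the form $F=\frac{(\bar b\ba+\bb)^2}{\ba}$ is preserved only under the deformations satisfying (\ref{specialdeformforssq}), and neither the deformation of Proposition \ref{meimeinignni} (which has $\kappa=b^{-2}-b^{4}\neq0$) nor that of Proposition \ref{iwienignubhd} (which has $\nu=e^{\rho}\neq e^{\rho/2}$) satisfies it; that is exactly why $F$ must be re-expressed in the non-square forms (\ref{iemiingiiiwa}) and (\ref{eimiignianieng}). The identity $\bar b=b$ is only the bookkeeping fact that lets one write those expressions in terms of $\bar b$.
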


Theorem \ref{Douglasssq} is quite different from the corresponding result on regular square metrics, see Section \ref{iemieninigi} for reasons. The final classification result up to the classification for the $1$-forms satisfying (\ref{specialoneformssqDa}) and (\ref{specialoneformssqDb}). We are not sure that if it is possible to determine such $1$-forms completely for an abstract Riemannian metric. At this moment, we can only provide an  effective way to construct the required data. The key technique is a new kinds of metrical deformations called $\b$-deformations\cite{yct-dhfp}, which is also the main method for our whole problem.

\section{Preliminaries}
In local coordinates, the geodesics of a Finsler metric $F=F(x,y)$ are characterized by
\begin{eqnarray*}
\frac{d^{2}x^{i}}{dt^{2}}+2G^{i}\big(x,\frac{dx}{dt}\big)=0,
\end{eqnarray*}
where $G^{i}:=\frac{1}{4}g^{il}\left\{[F^{2}]_{x^{k}y^{l}}y^{k}-[F^{2}]_{x^{l}}\right\}$. The local function $G^{i}=G^{i}(x,y)$ define a global vector field $G:=y^{i}\frac{\partial}{\partial x^{i}}-2G^{i}\frac{\partial}{\partial y^{i}}$ on $TM\backslash \{0\}$, which is called the {\it spray} of $F$. For a Riemannian metric, the spray coefficients are determined by its Christoffel symbols as $G^{i}(x,y)=\frac{1}{2}\gamma^{i}_{jk}(x)y^{j}y^{k}$.

Douglas metrics can be characterized by
\begin{eqnarray}\label{Dou}
G^{i}=\frac{1}{2}\Gamma^{i}_{jk}(x)y^{j}y^{k}+P(x,y)y^{i},
\end{eqnarray}
where $\Gamma^{i}_{jk}(x)$ are local functions on $M$ and $P(x,y)$ is a local positively homogeneous function of degree one. It  is known that Douglas metrics can be also characterized by the following equations\cite{BM}
\begin{eqnarray}\label{31}
G^{i}y^{j}-G^{j}y^{i}=\frac{1}{2}(\Gamma^{i}_{kl}y^{j}-\Gamma^{j}_{kl}y^{i})y^{k}y^{l}.
\end{eqnarray}

By definition, a general $(\a,\b)$-metric is a Finsler metric expressed in the following form,
\begin{eqnarray*}
F=\a\phi(b^{2},s), ~~ s:=\frac{\b}{\a}, ~~ b^{2}:=\|\b\|^{2}_{\a},
\end{eqnarray*}
where $\phi(b^{2},s)$ is a smooth function defined on the domain $|s|\leq b<b_o$ for some positive number (maybe infinity) $b_o$.
$F$ is a regular Finsler metric for any Riemannian metric $\alpha$ and  any $1$-form $\beta$ if and only if $\phi(b^{2},s)$ satisfies
\begin{eqnarray}\label{ppp}
\p-s\pt>0,\quad\p-s\pt+(b^2-s^2)\ptt>0,
\end{eqnarray}
when $n\geq 3$ or
\begin{eqnarray}\label{ppp1}
\p-s\pt+(b^2-s^2)\ptt>0,
\end{eqnarray}
when $n=2$ \cite{yct-zhm-onan}.

Let $\alpha=\sqrt{a_{ij}(x)y^iy^j}$  and $\beta= b_i(x)y^i$.
Denote the coefficients of the covariant derivative of
$\b$ with respect to $\a$ by $b_{i|j}$, and let
\begin{eqnarray*}
&r_{ij}=\frac{1}{2}(b_{i|j}+b_{j|i}),~s_{ij}=\frac{1}{2}(b_{i|j}-b_{j|i}),
~r_{00}=r_{ij}y^iy^j,~s^i{}_0=a^{ij}s_{jk}y^k,&\\
&r_i=b^jr_{ji},~s_i=b^js_{ji},~r_0=r_iy^i,~s_0=s_iy^i,~r^i=a^{ij}r_j,~s^i=a^{ij}s_j,~r=b^ir_i,&
\end{eqnarray*}
where $(a^{ij}):=(a_{ij})^{-1}$ and $b^{i}:=a^{ij}b_{j}$. It is easy to see that $\b$ is closed if and only if $s_{ij}=0$.

According to \cite{yct-zhm-onan}, the spray coefficients $G^i$ of a general $(\alpha,\beta)$-metric $F=\a\phi(b^{2},s)$ are related to the spray coefficients ${}^\a G^i$ of
$\a$ and given by
\begin{eqnarray}\label{Gi}
G^i&=&{}^\a G^i+\a Q s^i{}_0+\left\{\Theta(-2\a Q s_0+r_{00}+2\a^2
R r)+\a\Omega(r_0+s_0)\right\}\frac{y^i}{\a}\nonumber\\
&&+\left\{\Psi(-2\a Q s_0+r_{00}+2\a^2 R
r)+\a\Pi(r_0+s_0)\right\}b^i -\a^2 R(r^i+s^i),
\end{eqnarray}
where
\begin{eqnarray*}
&\displaystyle Q=\frac{\pt}{\p-s\pt},\quad R=\frac{\phi_{1}}{\p-s\pt},\quad\Theta=\frac{(\p-s\pt)\pt-s\p\ptt}{2\p\big(\p-s\pt+(b^2-s^2)\ptt\big)},&\\
&\displaystyle\Psi=\frac{\ptt}{2\big(\p-s\pt+(b^2-s^2)\ptt\big)},\quad\Pi=\frac{(\p-s\pt)\pot-s\po\ptt}{(\p-s\pt)\big(\p-s\pt+(b^2-s^2)\ptt\big)},
\quad\Omega=\frac{2\phi_{1}}{\p}-\frac{s\p+(b^2-s^2)\pt}{\p}\Pi.&
\end{eqnarray*}

\section{Proof of Theorem \ref{characterizationforssqD}}
Firstly, we prove the necessity. By (\ref{31}) and (\ref{Gi}), we can prove
a general $\ab$-metric is a Douglas metric if and only if
\begin{eqnarray}\label{e41}
&&\a Q (s^i{}_0y^{j}-s^j{}_0y^{i})+\left\{\Psi(-2\a Q s_0+r_{00}+2\a^2 R
r)+\a\Pi(r_0+s_0)\right\}(b^iy^{j}-b^{j}y^{i})\nonumber\\
&&-\a^2 R[(r^i+s^i)y^{j}-(r^j+s^j)y^{i}]=\frac{1}{2}\big(T^{i}_{kl}y^{j}-T^{j}_{kl}y^{i}\big)y^{k}y^{l}
\end{eqnarray}
where $T^{i}_{kl}:=\Gamma^{i}_{kl}-\gamma^{i}_{kl}$ and $\gamma^{i}_{kl}:=\frac{\partial^{2}G^{i}_{\a}}{\partial y^{k}\partial y^{l}}$.
Especially, it follows from (\ref{e41}) that (\ref{ssq}) is a Douglas metric if and only if
\begin{eqnarray*}
&&12b\a^2(b^2\a^2-\b^2)(s^i{}_0y^{j}-s^j{}_0y^{i})+2\a^2\big\{b(b\a-\b)r_{00}-4b\a^2 s_{0}+2\a^3 r+\a(b\a-3\b)(r_0+s_0)\big\}(b^iy^{j}-b^{j}y^{i})\nonumber\\
&&-6\a^3(b^2\a^2-\b^2)\big\{(r^i+s^i)y^{j}-(r^j+s^j)y^{i}\big\}-3b(b\a-\b)(b^2\a^2-\b^2)\big(T^{i}_{kl}y^{j}-T^{j}_{kl}y^{i}\big)y^{k}y^{l}=0,
\end{eqnarray*}
which is equivalent to
\begin{eqnarray}\label{ratirrat}
b \Rat+\a\Irrat=0,
\end{eqnarray}
where $\Rat$ and $\Irrat$ given below are both polynomials of y
\begin{eqnarray*}
\Rat&:=&2\big\{6b^2(s^i{}_0y^{j}-s^j{}_0y^{i})+(r_{0}-3s_{0})(b^iy^{j}-b^{j}y^{i})\big\}\a^{4}-\b\big\{2r_{00}(b^iy^{j}-b^{j}y^{i})+12\b(s^i{}_0y^{j}
-s^j{}_0y^{i})\\
&&-3b^2\big(T^{i}_{kl}y^{j}-T^{j}_{kl}y^{i}\big)y^{k}y^{l}\big\}\a^2-3\b^3\big(T^{i}_{kl}y^{j}-T^{j}_{kl}y^{i}\big)y^{k}y^{l},\\
\Irrat&:=&2\big\{2r(b^iy^{j}-b^{j}y^{i})-3b^2[(r^i+s^i)y^{j}-(r^j+s^j)y^{i}]\big\}\a^{4}+\big\{2[b^{2} r_{00}-3\b(r_0+s_0)](b^iy^{j}-b^{j}y^{i})\\
&&+6\b^2[(r^i+s^i)y^{j}-(r^j+s^j)y^{i}]-3b^4\big(T^{i}_{kl}y^{j}-T^{j}_{kl}y^{i}\big)y^{k}y^{l}\big\}\a^2
+3b^2 \b^2\big(T^{i}_{kl}y^{j}-T^{j}_{kl}y^{i}\big)y^{k}y^{l}.
\end{eqnarray*}
Since $\a$ is irrational about $y$,  (\ref{ratirrat}) implies
\begin{eqnarray*}
\Rat=0, ~~ \Irrat=0.
\end{eqnarray*}
In particular, it follows from $b^2 \Rat+\b \Irrat=0$ that
\begin{eqnarray}\label{S2}
&&\a^2\big\{6b^4(s^i{}_0y^{j}-s^j{}_0y^{i})-3b^2\b[(r^i+s^i)y^{j}-(r^j+s^j)y^{i}]+[b^2 (r_{0}-3s_{0})+2\b r](b^iy^{j}-b^{j}y^{i})\big\}\nonumber\\
&&-3\b^2\big\{2b^2(s^i{}_0y^{j}-s^j{}_0y^{i})-\b[(r^i+s^i)y^{j}-(r^j+s^j)y^{i}]+(r_0+s_0)(b^iy^{j}-b^{j}y^{i})\big\}=0.
\end{eqnarray}
Contract (\ref{S2}) with $y_{i}$ yields
\begin{eqnarray}\label{S3}
&&2\b \big\{b^2 (r_{0}+3s_{0})-\b r \big\}y^{j}+\big\{b^2\a^2(r_{0}-3 s_{0})-3\b^2(r_0+s_0)+2\b \a^2 r\big\}b^{j}\nonumber\\
&&+3(b^2\a^2-\b^2)\big\{2b^2 s^j{}_0-\b (r^{j}+ s^{j})\big\}=0.
\end{eqnarray}
Furthermore, contract (\ref{S3}) with $b_{j}$ yields $(b^2\a^2-\b^2)(b^2 r_{0}+3b^2 s_{0}-\b r)=0$. Notice that only if the vector $y$ is parallel to $(b^{i})$, can $b^2\a^2-\b^2$ be equal to zero. Hence, when $y\neq \lambda (b^{i})$, $b^2 r_{0}+3b^2 s_{0}-\b r=0$. It follows from continuity that $b^2 r_{0}+3b^2 s_{0}-\b r\equiv 0$ for any $y\in T_xM$, so
\begin{eqnarray}\label{S5}
s_{0}=\frac{1}{3b^2}(\b r-b^2 r_{0})
\end{eqnarray}
and hence $s^{i}=\frac{1}{3b^2}( r b^{i}-b^2 r^{i})$. Plugging them into (\ref{S3}) yields $s^j{}_0=\frac{1}{3b^2}(\b r^{j}-r_{0}b^{j})$, i.e.,
\begin{eqnarray}\label{S10}
\sij=-\frac{1}{3b^2}(\bi\rj-\bj\ri).
\end{eqnarray}
As a result, $\Irrat$ becomes
\begin{eqnarray}\label{S11}
 2\a^2(b^2 r_{00}-2\b r_{0}+\a^2 r)(b^{i}y^{j}-b^{j}y^{i})=(b^2\a^2-\b^2)\big\{4\a^2(r^{i}y^{j}-r^{j}y^{i})+3b^2\big(T^{i}_{kl}y^{j}-T^{j}_{kl}y^{i}\big)y^{k}y^{l}\big\}.
\end{eqnarray}
Notice that the rank of the matrix $\{b^2\aij-\bi\bj\}$ is equal to $n-1$, so $b^2\a^2-\b^2$ is an irreducible quadratic polynomial when $n\geq3$. As a result, it can't be divided by $\a^2$ and the linear function $b^{i}y^{j}-b^{j}y^{i}$. Hence, there must be a scalar function $\sigma(x)$ such that $b^2 r_{00}-2\b r_{0}+\a^2 r=\sigma(b^2\a^2-\b^2)$,
or equivalently,
\begin{eqnarray}\label{S13}
 b^4 r_{00}-2b^2\b r_{0}+\b^2 r=(\sigma b^2-r)(b^2\a^2-\b^2).
\end{eqnarray}
Be attend that the argument above depends on the condition $n\geq3$. When $n=2$,  the linear function $b^{i}y^{j}-b^{j}y^{i}$ can possible become the factor of $b^2\a^2-\b^2$. For instance, take $\a=\sqrt{u^2+v^2}$, $\b=vx-uy$, then $b^1y^2-b^2y^1=ux+vy$ is the factor of $b^2\a^2-\b^2=(ux+uy)^2$. Hence, we are not sure that if the key condition (\ref{S13}) still holds for the $2$-dimensional case.

By (\ref{S13}) one can see that $r_{00}$ is given in the form
\begin{eqnarray}\label{S14}
r_{00}=c(x)\a^2+d(x)\b^2+2\b\theta
\end{eqnarray}
for some scalar function $c(x)$, $d(x)$ and some $1$-form $\theta$. Conversely, it is easy to verify that any $1$-form satisfying (\ref{S14}) must satisfies (\ref{S13}). Moreover, we can always assume that $\theta$ is perpendicular to $\b$, i.e., $\theta_ib^i=0$. That is because if $\theta$ is not perpendicular to $\b$, we can replay $\theta$ as $\theta'+\frac{\theta^lb_l}{b^2}\b$, then the new $1$-form $\theta'$ will be perpendicular to $\b$.

From now on, we will assume $\roo$ is given by (\ref{S14}) in which $\theta$ is perpendicular to $\b$. In this case, we have
\begin{eqnarray*}
\ro=(c+b^2 d)\b+b^2\theta,\qquad r=(c+b^2 d)b^2.
\end{eqnarray*}
Plugging them into (\ref{S5}) yields $\theta_{i}=-\frac{3}{b^2}s_{j}$. Hence, $\rij$ is given by (\ref{ssqrij}) and by (\ref{S10}) we have (\ref{ssqsij}).

Conversely, if (\ref{ssqrij}) and (\ref{ssqsij}) hold, by (\ref{Gi}) we have
\begin{eqnarray*}
G^{i}= {}^\a G^i+Py^{i}-\left(\frac{1}{3} d b^{i}-\frac{2}{b^2}s^{i}\right)\a^{2},
\end{eqnarray*}
where  $P:=\frac{\a}{3b(b^2-s^2)}\left\{b(b-2s)(c+ds^2)+(2b^2+2bs-3s^2)(c+b^2d)\right\}-\frac{4}{b^2}s_{0}$.
Hence, $F$ is a Douglas metric due to (\ref{Dou}).

\section{Deformations}\label{iemieninigi}
In the rest of this paper, we would like to construct as many non-trivial Douglas singular square metrics as possible. The key technique is a special kind of metrical deformations called $\b$-deformations.

$\b$-{\emph deformations}, first introduced by the first author in \cite{yct-dhfp}, are a triple kind of deformations in terms of a given Riemannian metric $\a$ and a $1$-form $\b$ as follows,
\begin{eqnarray*}
&\ta=\sqrt{\a^2-\kappa(b^2)\b^2},\qquad\tb=\b;\label{b1}\\
&\ha=e^{\rho(b^2)}\ta,\qquad\hb=\tb;\label{b2}\\
&\ba=\ha,\qquad\bb=\nu(b^2)\hb.\label{b3}
\end{eqnarray*}
Be attention that the factor $\kappa$ must satisfy an additional condition $1-\kappa b^2>0$ to keep the resulting metric positive definite.

According to Lemma 2, Lemma 3 and Lemma 4 in \cite{yct-dhfp}, we have a basic formula for $\b$-deformations immediately.
\begin{proposition}\label{relationunderdeformationssg}
After $\b$-deformations,
\begin{eqnarray}
\brij&=&\f{\nu}{1-\kappa b^2}\rij+\f{\kappa\nu}{1-\kappa b^2}(\bi\sj+\bj\si)-\f{\kappa'\nu}{1-\kappa b^2}r\bi\bj+\f{2\rho'\nu}{1-\kappa b^2}r(\aij-\kappa\bi\bj)\nonumber\\
&&+\left(\f{\kappa'\nu b^2}{1-\kappa b^2}-2\rho'\nu+\nu'\right)\left\{\bi(\rj+\sj)+\bj(\ri+\si)\right\},\label{iwneingien}\\
\bsij&=&\nu\sij+\nu'\left\{\bi(\rj+\sj)-\bj(\ri+\si)\right\}.\label{iwneingienn}
\end{eqnarray}
\end{proposition}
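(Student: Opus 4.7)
The proof decomposes the composite $\b$-deformation $(\a,\b)\mapsto(\ba,\bb)$ into the three elementary deformations listed just before the statement, and invokes Lemmas 2, 3 and 4 of \cite{yct-dhfp} in sequence, so that the whole argument reduces to a clean chain-rule composition.

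First, under the deformation $\ta=\sqrt{\a^2-\kappa(b^2)\b^2}$, $\tb=\b$, Lemma 2 of \cite{yct-dhfp} expresses $\trij$ and $\tsij$ in terms of $\rij$, $\sij$, $\ri$, $\si$, $\bi$ and the scalar $\kappa$ together with its derivative $\kappa'$; the $\kappa'$ contribution arises through the chain rule when $\kappa(b^2(x))$ is differentiated with respect to $x^k$. At this stage one should also record the two auxiliary identities $\tilde b^2=b^2/(1-\kappa b^2)$ and $\tilde a^{ij}=a^{ij}+\kappa(1-\kappa b^2)^{-1}b^ib^j$, since they control how indices are raised by $\ta$ in the following steps. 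Next, under the conformal change $\ha=e^{\rho(b^2)}\ta$, $\hb=\tb$, Lemma 3 of \cite{yct-dhfp} gives $\hrij$ and $\hsij$ in terms of $\trij$, $\tsij$ and $\rho'$; the $1$-form is left alone, so the only new contributions come from the conformal factor and from differentiating $\rho(b^2)$. Finally, under $\ba=\ha$ and $\bb=\nu(b^2)\hb$, Lemma 4 of \cite{yct-dhfp} produces $\brij$ and $\bsij$, introducing a factor of $\nu$ on the symmetric/antisymmetric parts of the covariant derivative and a factor of $\nu'$ on the terms $\bi(\rj+\sj)\pm\bj(\ri+\si)$.

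Composing the three steps and gathering like terms yields exactly the identities (\ref{iwneingien}) and (\ref{iwneingienn}): the scalar prefactor $\nu/(1-\kappa b^2)$ on $\rij$ and $\sij$ is the product of the rescalings introduced by Steps 1 and 3 (Step 2 affects neither $\rij$ nor $\sij$ since $\hb=\tb$ and the conformal change is balanced between the two sides), while the contributions proportional to $\kappa'$, $\rho'$ and $\nu'$ assemble into the four bracketed pieces on the right-hand sides.

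The only genuine obstacle is bookkeeping: the intermediate objects $\trij,\tsij,\hrij,\hsij$ carry indices that are raised and contracted with $\ta$ and $\ha$, so at each substitution one must translate them back to $\a$-raising using $\tilde a^{ij}$ and $\ha^{ij}=e^{-2\rho}\tilde a^{ij}$ before passing to the next step. Once this translation is done consistently the three lemmas slot together without any further input, and no separate manipulation beyond collecting $\kappa,\rho,\nu$ and their $b^2$-derivatives is required.
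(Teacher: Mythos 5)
Your proposal is correct and follows exactly the paper's route: the paper's own proof consists of citing Lemmas 2, 3 and 4 of \cite{yct-dhfp} for the three elementary deformations and composing them, which is precisely your chain-rule decomposition (your auxiliary identities $\tilde b^2=b^2/(1-\kappa b^2)$ and the Sherman--Morrison form of $\tilde a^{ij}$ are the right bookkeeping and are consistent with the paper's formula (\ref{s4})). No discrepancy to report.
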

Be attention that $\brij:=\frac{1}{2}(\bbij+\bar b_{j|i})$ and $\bsij:=\frac{1}{2}(\bbij-\bar b_{j|i})$ in which $\bbij$ means the covariant derivative of $\bb$ with respect to the corresponding metric $\ba$.

As a application of Proposition \ref{relationunderdeformationssg}, the following result shows the relationship between singular square metrics and the metrics $F=\sq$, although it is useless in the next discussions.
\begin{proposition}\label{imineiigadiing}
Suppose $\a$ and $\b$ satisfy (\ref{ssqrij}) and (\ref{ssqsij}). Take $\kappa=0$, $\rho=\ln(b^2)$ and $\nu=b$ as the deformations factors, then the resulting data $\ba$ and $\bb$ satisfy
\begin{eqnarray}\label{mieningienigndl}
\brij=\bar\tau(\baij-\bbi\bbj),\qquad\bsij=0,
\end{eqnarray}
where $\bar\tau=\frac{3c+2b^2d}{b^3}$. In this case, $\bar b=1$ and the metric $F=\ssq$ is given by $F=\frac{(\ba+\bb)^2}{\ba}$.
\end{proposition}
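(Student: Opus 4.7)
The plan is to invoke Proposition \ref{relationunderdeformationssg} directly with the prescribed data $\kappa=0$, $\rho=\ln(b^2)$, $\nu=b$, then simplify using (\ref{ssqrij}) and (\ref{ssqsij}). First, unwinding the three-step deformation: since $\kappa=0$ one has $\ta=\a$, $\tb=\b$; then $\ha=b^2\a$, $\hb=\b$; and finally $\ba=b^2\a$, $\bb=b\b$. Consequently $\baij=b^4\aij$, $\bbi=b\bi$, and $\bar b^2=b^{-4}\cdot b^2\cdot b^2=1$. Substituting $\a=\ba/b^2$ and $\b=\bb/b$ into $F=\ssq$ immediately yields $F=(\ba+\bb)^2/\ba$, the standard square-metric expression with $\bar b\equiv 1$.

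For the curvature identities, treating $\kappa,\rho,\nu$ as functions of $t=b^2$ one finds $\kappa'=0$, $\rho'=1/b^2$, $\nu'=1/(2b)$. From (\ref{ssqrij}), using $\si\bI=0$ (a consequence of the antisymmetry of $\sij$), the auxiliary identities $\rj=(c+b^2 d)\bj-3\sj$, $r=(c+b^2 d)b^2$, and hence $\rj+\sj=(c+b^2 d)\bj-2\sj$ follow at once. Inserting these into (\ref{iwneingien}) with $\kappa=\kappa'=0$ reduces $\brij$ to three contributions: the term $\nu\rij=b\rij$, the isotropic piece $2\rho'\nu r\,\aij=2b(c+b^2 d)\aij$, and the coefficient $\nu'-2\rho'\nu=-3/(2b)$ multiplying $\bi(\rj+\sj)+\bj(\ri+\si)$. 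The crucial cancellation is that the $\bi\sj+\bj\si$ piece generated by the $-3/b^2$ term in (\ref{ssqrij}) is exactly annihilated by the $-3/(2b)\cdot(-2)(\bi\sj+\bj\si)$ piece produced by $\rj+\sj$. Collecting the surviving $\aij$ and $\bi\bj$ contributions gives
\begin{equation*}
\brij=b(3c+2b^2 d)\aij-\frac{3c+2b^2 d}{b}\bi\bj,
\end{equation*}
which matches $\bar\tau(\baij-\bbi\bbj)$ with $\bar\tau=(3c+2b^2 d)/b^3$ after accounting for $\baij=b^4\aij$ and $\bbi\bbj=b^2\bi\bj$.

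For the antisymmetric part, feeding the same auxiliary data into (\ref{iwneingienn}), the $(c+b^2 d)\bj$ portion of $\rj+\sj$ self-cancels inside the antisymmetric combination $\bi(\rj+\sj)-\bj(\ri+\si)$, leaving $\bsij=b\sij-b^{-1}(\bi\sj-\bj\si)$, which vanishes identically by (\ref{ssqsij}).

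I do not anticipate a genuine obstacle: the argument is a bookkeeping exercise on top of Proposition \ref{relationunderdeformationssg}. The only delicate point is recognising that the coefficient $-3/b^2$ in (\ref{ssqrij}) is engineered precisely so the various $\bi\sj+\bj\si$ contributions in $\brij$ cancel; absent this specific coefficient $\brij$ would fail to reduce to the clean form $\bar\tau(\baij-\bbi\bbj)$, so the value of $\bar\tau$ is already dictated by the remaining $\aij$ and $\bi\bj$ coefficients.
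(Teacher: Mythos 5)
Your computation is correct and is exactly the intended argument: the paper states Proposition \ref{imineiigadiing} as a direct application of Proposition \ref{relationunderdeformationssg} without writing out the details, and your substitution of $\kappa=0$, $\rho'=1/b^2$, $\nu'=1/(2b)$ together with the auxiliary identities $\rj=(c+b^2d)\bj-3\sj$, $r=(c+b^2d)b^2$ reproduces the claimed $\brij$, $\bsij=0$, $\bar b=1$, and the form of $F$. No gaps.
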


According to Theorem 1.1 in \cite{Li}, the necessary and sufficient condition for a square metric $F=\sq$ to be of vanishing Douglas curvature is that $\a$ and $\b$ satisfy
\begin{eqnarray*}
\bij=\tau\left\{(1+2b^2)a_{ij}-3b_ib_j\right\}.
\end{eqnarray*}
One can see that (\ref{mieningienigndl}) is coincide with the above result by taking $b=1$, in spite of Theorem 1.1 in \cite{Li} was obtained for the regular case~(namely $b<1$).

Proposition \ref{imineiigadiing} indicates that all the singular square metrics $F=\ssq$ belong to the metrical category $F=\frac{(\ba+\bb)^2}{\ba}$ with $\bar b=1$. Conversely, any Finsler metric $F=\frac{(\ba+\bb)^2}{\ba}$ with $\bar b=1$ can always be reexpressed as the form (\ref{ssq}) such that $b$ is not a constant. Let
\begin{eqnarray*}
\a=\f{\ba}{b(x)^2},\qquad\b=\f{\bb}{b(x)},
\end{eqnarray*}
where $b(x)$ is an arbitrarily positive scalar function, then it is easy to verify that $b(x)$ is  the length of the new $1$-form $\b$ with respect to the new metric $\a$. In this case, $F=\ssq$. Anyway, singular square metrics are just the Finsler metrics $F=\frac{(\ba+\bb)^2}{\ba}$ with $\bar b=1$.

The condition (\ref{mieningienigndl}) is simpler than (\ref{ssqrij}) and (\ref{ssqsij}) in some sense. However, it seems that it is impossible to discuss singular square metrics in the form $F=\sq$ using $\b$-deformations. The reason is $b=1$, which leads to the irreversibility of the $\b$-deformations using in Proposition \ref{imineiigadiing}.

Moreover, the observation above indicate a fact that any given singular square metric $F=\frac{(\ba+\bb)^2}{\ba}$ with $\bar b=1$ has infinity many ways to be expressed as the form (\ref{ssq}):~a different $b(x)$ means a different way. Actually, such phenomenon can be described in another way.
\begin{lemma}\label{Le1}
After $\b$-deformations, $F=\ssq$ is still of the form $F=\frac{(\bar b\ba+\bb)^2}{\ba}$  if and only if
\begin{eqnarray}\label{specialdeformforssq}
\kappa=0,\qquad\nu=e^{\frac{1}{2}\rho}.
\end{eqnarray}
\end{lemma}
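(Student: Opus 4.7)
The plan is to turn ``$F$ retains the singular square form after the deformation'' into a single-variable identity and then exploit the irrationality of $\sqrt{1-\kappa t^2}$ in $t$. First, I would compute the deformed data explicitly: from $\ba^2 = e^{2\rho}(a_{ij}-\kappa b_ib_j)y^iy^j$ one reads $\ba_{ij} = e^{2\rho}(a_{ij}-\kappa b_ib_j)$, and inverting gives $\ba^{ij}=e^{-2\rho}\left(a^{ij}+\f{\kappa}{1-\kappa b^2}b^ib^j\right)$, whence with $\bb=\nu\b$,
\[
\bar b^2 \;=\; \ba^{ij}\bb_i\bb_j \;=\; \f{\nu^2 b^2}{e^{2\rho}(1-\kappa b^2)}.
\]
Combined with $\ba = e^\rho\sqrt{\a^2-\kappa\b^2}$, the required identity $\f{(b\a+\b)^2}{\a}=\f{(\bar b\ba+\bb)^2}{\ba}$ clears denominators into
\[
e^\rho(1-\kappa b^2)(b\a+\b)^2\sqrt{\a^2-\kappa\b^2} \;=\; \nu^2\a\left(b\sqrt{\a^2-\kappa\b^2}+\b\sqrt{1-\kappa b^2}\right)^2.
\]

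Next I would introduce the scalar variable $t=\b/\a$ (well defined and varying continuously in $(-b,b)$ at each fixed $x$) and divide through by $\a^3$, reducing the condition to
\[
e^\rho(1-\kappa b^2)(b+t)^2\sqrt{1-\kappa t^2} \;=\; \nu^2\left(b\sqrt{1-\kappa t^2}+t\sqrt{1-\kappa b^2}\right)^2,
\]
which must hold for every $t\in(-b,b)$ while $b,\kappa,\rho,\nu$ remain constant (they depend only on $x$). Expanding the square on the right and regrouping produces
\[
\sqrt{1-\kappa t^2}\left[e^\rho(1-\kappa b^2)(b+t)^2-2\nu^2 bt\sqrt{1-\kappa b^2}\right] \;=\; \nu^2\left[b^2(1-\kappa t^2)+t^2(1-\kappa b^2)\right].
\]

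The decisive step is the observation that if $\kappa\neq 0$, then $\sqrt{1-\kappa t^2}$ is not a rational function of $t$ on $(-b,b)$, so the ``rational'' right-hand side and the irrational coefficient on the left must vanish separately; evaluating the right at $t=0$ then forces $\nu^2 b^2=0$, contradicting nondegeneracy of the deformation (here $b>0$ generically). Hence $\kappa=0$, and the identity collapses to $e^\rho(b+t)^2=\nu^2(b+t)^2$, giving $\nu^2=e^\rho$. The positive root $\nu=e^{\rho/2}$ is the correct choice: the negative root would effectively replace $\b$ by $-\b$ and yield $(b\a-\b)^2/\a\neq F$. Sufficiency is then a direct substitution: with $\kappa=0$ and $\nu=e^{\rho/2}$ one has $\ba=e^\rho\a$, $\bb=e^{\rho/2}\b$, $\bar b=e^{-\rho/2}b$, so $\f{(\bar b\ba+\bb)^2}{\ba}=\f{e^\rho(b\a+\b)^2}{e^\rho\a}=F$. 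I expect the main obstacle to be nothing conceptual but rather the bookkeeping needed to pass from the manifest identity on $(\a,\b)$ to the one-variable identity in $t$; once the reduction is carried out, the rational/irrational separation is essentially forced.
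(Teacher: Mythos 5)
Your proof is correct and follows essentially the same route as the paper: substitute the deformed quantities $\ba=e^\rho\sqrt{\a^2-\kappa\b^2}$, $\bb=\nu\b$, $\bar b^2=\nu^2e^{-2\rho}b^2/(1-\kappa b^2)$ into $(\bar b\ba+\bb)^2/\ba$ and compare with $F$. Your reduction to a one-variable identity in $t=\b/\a$ and the rational/irrational separation in fact supplies the justification for $\kappa=0$ that the paper's one-line ``comparing \ldots we obtain the conclusion'' leaves implicit.
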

\begin{proof}
By (\ref{s4}) we have
\begin{eqnarray}\label{s8}
F=\f{\left(\frac{e^{-\rho}|\nu|}{\sqrt{1-\kappa b^2}}b\cdot e^{\rho}\a+\nu\b\right)^2}{e^\rho\a}=\f{\nu^2\left(\frac{1}{\sqrt{1-\kappa b^2}}b\a+\mathrm{sgn}\,(\nu)\b\right)^2}{e^\rho\a}.
\end{eqnarray}
Comparing (\ref{s8}) with  $F=\ssq$ we obtain the conclusion.
\end{proof}

Although the form (\ref{ssq}) for singular square metrics has some shortcoming, we have to use it in the whole discussions. The key point is that the method of $\b$-deformations works because $b$ is not necessary a constant.

The following result shows the effect of the $\b$-deformations satisfying (\ref{specialdeformforssq}).
\begin{lemma}\label{Le}
If $\a$ and $\b$ satisfy (\ref{ssqrij}), then after the $\b$-deformations satisfying (\ref{specialdeformforssq}),
\begin{eqnarray}\label{owmeingeninnj}
\brij=\bar c(x)\baij+\bar d(x)\bbi\bbj-\f{3}{\bar b^2}(\bbi\bsj+\bbj\bsi),
\end{eqnarray}
where
\begin{eqnarray*}
\bar c=e^{-\frac{3}{2}\rho}\left\{(1+2b^2\rho')c+2b^4\rho'd\right\},\qquad
\bar d=-e^{-\frac{1}{2}\rho}\left\{3\rho'c-(1-3b^2\rho')d\right\}.
\end{eqnarray*}
\end{lemma}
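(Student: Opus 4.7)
The plan is to treat this as a direct substitution exercise into Proposition \ref{relationunderdeformationssg}, with the only subtlety being the final rewriting into barred quantities. First I would specialize (\ref{iwneingien}) to the deformation parameters in (\ref{specialdeformforssq}): setting $\kappa=0$ kills the $\kappa$-terms outright, and using $\nu=e^{\rho/2}$ gives $\nu'=\tfrac12\rho'\nu$, so the coefficient in front of $b_i(r_j+s_j)+b_j(r_i+s_i)$ collapses from $-2\rho'\nu+\nu'$ to $-\tfrac{3}{2}\rho'\nu$. The formula then reads
\begin{equation*}
\brij=\nu\rij+2\rho'\nu\,r\,\aij-\tfrac{3}{2}\rho'\nu\{\bi(\rj+\sj)+\bj(\ri+\si)\}.
\end{equation*}

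Next I would substitute (\ref{ssqrij}) for $\rij$ and compute the contracted quantities it forces. Contracting (\ref{ssqrij}) with $\bj$ and using $\bI\si=0$ yields $\ri=(c+b^2d)\bi-3\si$, hence $r=b^2(c+b^2d)$ and $\ri+\si=(c+b^2d)\bi-2\si$. Plugging these in, the symmetric combination $\bi(\rj+\sj)+\bj(\ri+\si)$ simplifies to $2(c+b^2d)\bi\bj-2(\bi\sj+\bj\si)$, and after collecting like terms $\brij$ becomes a linear combination of $\aij$, $\bi\bj$, and $\bi\sj+\bj\si$ with explicit scalar coefficients in $c,d,\rho',b^2,\nu$.

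Finally I would rewrite the three pieces in the barred frame. Since $\kappa=0$, $\baij=e^{2\rho}\aij$ and $\bbi=\nu\bi$, so $\bar b^2=e^{-\rho}b^2$ and one gets $\aij=e^{-2\rho}\baij$, $\bi\bj=e^{-\rho}\bbi\bbj$. For the $\si$ terms I would first apply (\ref{iwneingienn}) together with (\ref{ssqsij}) and the computed $\ri+\si$; the antisymmetric combination $\bi(\rj+\sj)-\bj(\ri+\si)$ reduces to $2(\bj\si-\bi\sj)$, so one obtains $\bsij=\big(\frac{\nu}{b^2}-2\nu'\big)(\bi\sj-\bj\si)$ and consequently $\bsi=e^{-\rho}(1-b^2\rho')\si$ after contracting with $\bar b^j$. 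A direct check then shows that the coefficient of $\bi\sj+\bj\si$ in $\brij$ is exactly $-\frac{3}{\bar b^2}$ times $\bbi\bsj+\bbj\bsi$, confirming the third term of (\ref{owmeingeninnj}). Reading off the coefficients of $\baij$ and $\bbi\bbj$ recovers the stated formulas for $\bar c$ and $\bar d$.

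There is no real obstacle here—just bookkeeping. The only step where a sign or factor is easy to lose is the $\bsi$ computation, because one has to remember that $\bbi$ and the components of $\bsij$ both scale nontrivially with $\nu$ and $e^\rho$, and that the final $\bar b^2$ in the target expression carries a compensating $e^{-\rho}$; getting all three rescalings consistent is what makes the $\si$-term land on the nose.
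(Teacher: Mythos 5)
Your computation is correct and the coefficients check out: with $\kappa=0$ and $\nu=e^{\rho/2}$ the coefficient $-2\rho'\nu+\nu'$ in (\ref{iwneingien}) indeed becomes $-\tfrac32\rho'\nu$, the contractions of (\ref{ssqrij}) give $\ri=(c+b^2d)\bi-3\si$ and $r=(c+b^2d)b^2$, and the rescalings $\baij=e^{2\rho}\aij$, $\bbi=\nu\bi$, $\bar b^2=e^{-\rho}b^2$, $\bsi=e^{-\rho}(1-b^2\rho')\si$ make all three terms land on (\ref{owmeingeninnj}) with the stated $\bar c$ and $\bar d$. The paper's own proof is shorter but less self-contained: it first uses Lemma \ref{Le1} to note that the deformation preserves the form $F=\frac{(\bar b\ba+\bb)^2}{\ba}$, then invokes Theorem \ref{characterizationforssqD} to conclude a priori that $\brij$ must have the shape (\ref{owmeingeninnj}) for some $\bar c,\bar d$, and only then reads off the two coefficients from (\ref{ssqrij}) and (\ref{iwneingien}). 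You instead verify all three terms directly, including the coefficient $-\f{3}{\bar b^2}$ of $\bbi\bsj+\bbj\bsi$, which the paper gets for free from the structural argument. Your route costs a little more bookkeeping but buys something real: the appeal to Theorem \ref{characterizationforssqD} tacitly requires the full Douglas condition (both (\ref{ssqrij}) and (\ref{ssqsij})), whereas the lemma as stated assumes only (\ref{ssqrij}); your explicit check of the $s$-term closes that small logical gap. One inessential remark: you route the computation of $\bsi$ through (\ref{ssqsij}), but this is not needed — contracting (\ref{iwneingienn}) with $\bar b^j$ and using only $b^2\ri-r\bi=-3b^2\si$ already yields $\bsi=e^{-\rho}(1-b^2\rho')\si$, so the lemma's stated hypothesis suffices throughout.
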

\begin{proof}
Now that $F$ is still expressed as the form $F=\frac{(\bar b\ba+\bb)^2}{\ba}$ after the $\b$-deformations satisfying (\ref{specialdeformforssq}), then Theorem \ref{characterizationforssqD} indicate that $\brij$ must satisfy (\ref{owmeingeninnj}) for some function $\bar c(x)$ and $\bar d(x)$. Finally, $\bar c$ and $\bar d$ can be determined by (\ref{ssqrij}) and (\ref{iwneingien}) directly.
\end{proof}

The property (\ref{ssqsij}) is interesting. It is invariable under all $\b$-deformations.

\begin{lemma}\label{inv}
If $\a$ and $\b$ satisfies (\ref{ssqsij}), then after any $\b$-deformation,
\begin{eqnarray*}
\bsij=\f{1}{\bar b^2}(\bbi\bsj-\bbj\bsi).
\end{eqnarray*}
\end{lemma}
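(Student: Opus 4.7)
The plan is to feed the hypothesis (\ref{ssqsij}) into the transformation rule (\ref{iwneingienn}) of Proposition \ref{relationunderdeformationssg}, collect the result into a manifestly skew expression along $\bb$, and then close the argument with a purely algebraic identity that automatically supplies the factor $\frac{1}{\bar b^2}$.

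Substituting $s_{ij}=\frac{1}{b^2}(\bi\sj-\bj\si)$ into (\ref{iwneingienn}) and grouping the resulting terms by $\bi$ and $\bj$ yields
\begin{eqnarray*}
\bsij=\bi\!\left(\f{\nu\sj}{b^2}+\nu'(\rj+\sj)\right)-\bj\!\left(\f{\nu\si}{b^2}+\nu'(\ri+\si)\right).
\end{eqnarray*}
Because the $\b$-deformation scales the $1$-form precisely by $\nu$, one has $\bbi=\nu\bi$, so the right-hand side can be rewritten as $\bbi T_j-\bbj T_i$ for a suitable covector $T$; in particular, $\bsij$ is skew and already splits along the direction of $\bb$.

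The proof then finishes with the following elementary observation: for any antisymmetric tensor of the shape $\omega_{ij}=v_iw_j-v_jw_i$ with $v$ a nonzero $1$-form of squared length $|v|^2$, contracting with $v^k$ gives $v^k\omega_{kj}=|v|^2w_j-v_j(v^kw_k)$; solving for $w_j$ and resubstituting into the decomposition makes the $(v^kw_k)$-terms cancel pairwise, leaving
\begin{eqnarray*}
\omega_{ij}=\f{1}{|v|^2}\big(v_i\,v^k\omega_{kj}-v_j\,v^k\omega_{ki}\big).
\end{eqnarray*}
Applied with $v=\bb$ and $\omega_{ij}=\bsij$, this is exactly the claim, since $\bb^k\bar s_{kj}=\bsj$ by definition. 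I do not anticipate any serious obstacle: Proposition \ref{relationunderdeformationssg} has already carried out the only substantive computation, and the remainder is pure index algebra. The one point that calls for modest care is the sign bookkeeping ensuring that the antisymmetric pairing comes out as $\bbi\bsj-\bbj\bsi$ rather than its opposite, which is automatic once one respects the convention $\si=\bj \sij$ fixed in Section 2.
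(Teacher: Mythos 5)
Your proposal is correct, and its second half takes a genuinely different route from the paper's. Both arguments begin identically, by feeding $\sij=\f{1}{b^2}(\bi\sj-\bj\si)$ into the transformation rule (\ref{iwneingienn}); your regrouped display $\bsij=\bi\bigl(\f{\nu\sj}{b^2}+\nu'(\rj+\sj)\bigr)-\bj\bigl(\f{\nu\si}{b^2}+\nu'(\ri+\si)\bigr)$ is accurate, and with $\bbi=\nu\bi$ it indeed exhibits $\bsij$ as $\bbi T_j-\bbj T_i$. From there the paper proceeds by explicitly computing $\bar b^i$, $\bar b^2$ and $\bar s_i$ in terms of the original data and verifying the identity $\bsij-\f{1}{\bar b^2}(\bbi\bsj-\bbj\bsi)=\nu\{\sij-\f{1}{b^2}(\bi\sj-\bj\si)\}$, which holds for an arbitrary $\b$ and shows that the deviation from the invariant condition merely gets rescaled by $\nu$. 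You instead invoke the general pointwise fact that any antisymmetric tensor of the decomposable form $\omega_{ij}=v_iw_j-v_jw_i$ automatically satisfies $\omega_{ij}=\f{1}{|v|^2}(v_i\,v^k\omega_{kj}-v_j\,v^k\omega_{ki})$, which I have checked: contracting gives $v^k\omega_{kj}=|v|^2w_j-(v^kw_k)v_j$, and the $(v^kw_k)$ terms cancel in the antisymmetrization. Applied with $v=\bb$ (indices raised by $\bar a^{ij}$, so that $\bb^k\bar s_{kj}=\bsj$ and $|v|^2=\bar b^2$, consistently with the paper's conventions), this yields the claim directly. Your route is arguably cleaner for the lemma as stated, since it bypasses the explicit formulas for $\bar b^2$ and $\bar s_i$ entirely and only needs $\bb\neq0$; the paper's route buys the stronger quantitative identity (\ref{s5}), which records how far a general $\b$ is from satisfying (\ref{ssqsij}) after deformation. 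The only hypotheses you should make explicit are that $\bb$ is nonvanishing (guaranteed since $b>0$ throughout) and that the sign conventions $\sj=\bK s_{kj}$ are respected, which you have already noted.
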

\begin{proof}
It is easy to verify that $\bar{b}^{i}=\frac{\nu e^{-2\rho}}{1-\kappa b^2}b^{i}$, so
\begin{eqnarray}\label{s4}
\bar{b}^{2}=\frac{e^{-2\rho}\nu^{2}}{1-\kappa b^2}b^{2}
\end{eqnarray}
and by (\ref{iwneingienn}) we have
\begin{eqnarray*}
\bar{s}_{i}=\frac{\nu e^{-2\rho}}{1-\kappa b^2}\big\{(\nu+b^2\nu')s_{i}+\nu'(b^2r_{i}-rb_{i})\big\}.
\end{eqnarray*}
Hence, by (\ref{iwneingienn}) again
\begin{eqnarray}\label{s5}
\bsij-\f{1}{\bar b^2}(\bbi\bsj-\bbj\bsi)=\nu\left\{\sij-\f{1}{b^2}(\bi\sj-\bj\si)\right\}.
\end{eqnarray}
By (\ref{ssqsij}) and (\ref{s5}), we complete the proof.
\end{proof}

According to Lemma \ref{Le1}, one can see that for a specific singular square metrics, there are infinity many way to express it to be the form $F=\ssq$. Such phenomenon does not happen for regular square metrics:~for a specific regular square metric, there is only one way to express it to be the form $F=\sq$. Hence, we need to differentiate the repetitive metrics due to Lemma \ref{Le}.

In the next two sections, we will discuss this problem under the assumption that the ratio $c:d$ depends only on $b$. We will show that, the case $c+b^2d\neq0$ can always be reduced to the case $d=0$, and the case $c+b^2d=0$ is self-consistent under the allowable deformations.

\section{Case 1:~$c+b^2d\neq0$}
\begin{lemma}\label{ieminignienigss}
Suppose that $\a$ and $\b$ satisfy (\ref{ssqrij}) under the assumption that $c:d$ depends only on $b$. If $c+b^2d\neq0$, then $\ba$ and $\bb$ satisfy
\begin{eqnarray}\label{s9}
\brij=\bar c(x)\baij-\f{3}{\bar b^2}(\bbi\bsj+\bbj\bsi)
\end{eqnarray}
when $\kappa=0$, $\rho=\frac{1}{3}\int\f{d}{c+b^2d}\ud b^2$ and $\nu=e^{\frac{1}{2}\rho}$.
\end{lemma}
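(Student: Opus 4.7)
The plan is to apply Lemma~\ref{Le} and to choose the free deformation factor $\rho(b^2)$ so that the $\bbi\bbj$-term in the transformed equation (\ref{owmeingeninnj}) is killed, reducing it to (\ref{s9}). Since only the coefficient $\bar d$ must be made to vanish, the whole argument is essentially a one-parameter algebraic matching.

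First I would note that the prescribed data $\kappa=0$, $\nu=e^{\frac{1}{2}\rho}$ are exactly those characterized by (\ref{specialdeformforssq}), so Lemma~\ref{Le} applies and produces $\brij$ in the form (\ref{owmeingeninnj}) with the explicit coefficients
\begin{equation*}
\bar c=e^{-\frac{3}{2}\rho}\left\{(1+2b^2\rho')c+2b^4\rho' d\right\},\qquad \bar d=-e^{-\frac{1}{2}\rho}\left\{3\rho' c-(1-3b^2\rho')d\right\}.
\end{equation*}
Imposing $\bar d=0$ and using the hypothesis $c+b^2d\neq0$ to divide, a direct algebraic rearrangement gives
\begin{equation*}
\rho'=\frac{d}{3(c+b^2d)},
\end{equation*}
whose antiderivative in $b^2$ is precisely the $\rho$ stated in the lemma.

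The main thing to verify is \emph{admissibility} of this $\rho$: the $\b$-deformation framework of Proposition~\ref{relationunderdeformationssg} demands that $\rho$ be a function of $b^2$ alone, whereas $c$ and $d$ are a priori arbitrary scalar functions on $M$. This is where the standing hypothesis that the ratio $c:d$ depends only on $b$ becomes essential. Writing $d=\mu(b)c$ (and handling the degenerate case $c\equiv0$ separately), one sees that $\frac{d}{c+b^2d}=\frac{\mu}{1+b^2\mu}$ is genuinely a function of $b$ alone, so the integral defining $\rho$ makes sense as a function of $b^2$. Once admissibility is settled, equation (\ref{owmeingeninnj}) collapses to (\ref{s9}) with $\bar c(x)$ as given above, completing the proof.
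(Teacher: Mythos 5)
Your proposal is correct and follows essentially the same route as the paper: apply Lemma~\ref{Le}, impose $\bar d=0$, and solve $3\rho'c-(1-3b^2\rho')d=0$ for $\rho'=\frac{d}{3(c+b^2d)}$ using $c+b^2d\neq0$. Your explicit check that the hypothesis on $c:d$ makes $\rho$ a legitimate function of $b^2$ alone is a point the paper leaves implicit, but it does not change the argument.
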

\begin{proof}
According to Lemma \ref{Le}, (\ref{s9}) holds if
\begin{eqnarray*}
3\rho'c-(1-3b^2\rho')d=0,
\end{eqnarray*}
Since  $c+b^2d\neq0$, solving the above equation yields $\rho=\frac{1}{3}\int\f{d}{c+b^2d}\ud b^2$.
\end{proof}

\begin{lemma}\label{equationXY}
If $\a$ and $\b$ satisfy (\ref{ssqrij}), then after $\b$-deformations,
\begin{eqnarray}\label{s10}
\brij=\f{e^{-2\rho}\nu}{1-b^2\kappa}\left\{(1+2b^2\rho')c+2b^4\rho'd\right\}\baij+X\bi\bj+Y(\bi\sj+\bj\si),
\end{eqnarray}
where
\begin{eqnarray*}
X&=&\f{\nu}{1-b^2\kappa}\left\{(c\kappa+d)+b^2(c+b^2d)\kappa'\right\}-2(c+b^2d)(2\rho'\nu-\nu'),\\
Y&=&-\f{\nu}{b^2(1-b^2\kappa)}(3-b^2\kappa+2b^4\kappa')+4\rho'\nu-2\nu'.
\end{eqnarray*}
In particular, if the deformation factors satisfy
\begin{eqnarray}\label{s14}
\kappa=\f{1}{b^2}-Cb^2e^{\int\f{c}{b^2(c+b^2d)}\,\ud b^2},\qquad\nu=\f{D(1-b^2\kappa)e^{2\rho}}{b^3},
\end{eqnarray}
where $C>0$ and $D\neq0$ are constants, then $\bb$ is conformal with respect to $\ba$.
\end{lemma}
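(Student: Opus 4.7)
The plan is to first derive the explicit formula (\ref{s10}) by substituting (\ref{ssqrij}) into the master formula (\ref{iwneingien}) of Proposition \ref{relationunderdeformationssg}, and then to verify that the choice (\ref{s14}) forces the coefficients $X$ and $Y$ to vanish, which is exactly what conformality requires.

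For the first step, the key preparatory computation is a set of contractions of (\ref{ssqrij}). Since $\sij$ is antisymmetric, $b^{i}\si=b^{i}b^{j}s_{ji}=0$, so contracting (\ref{ssqrij}) with $b^{j}$ gives $\ri = (c+b^{2}d)\bi - 3\si$, and a further contraction with $b^{i}$ yields $r=(c+b^{2}d)b^{2}$; in particular $\ri+\si=(c+b^{2}d)\bi-2\si$. Plugging these together with (\ref{ssqrij}) into the right-hand side of (\ref{iwneingien}) and grouping by tensorial type produces an expression of the shape $A\,\aij+B\,\bi\bj+C(\bi\sj+\bj\si)$. Eliminating $\aij$ in favour of $\baij$ via the relation $\aij = e^{-2\rho}\baij+\kappa\bi\bj$ (which follows from $\baij = e^{2\rho}(\aij-\kappa\bi\bj)$), the $A$-coefficient produces the announced $\baij$-coefficient $\f{e^{-2\rho}\nu}{1-b^{2}\kappa}\{(1+2b^{2}\rho')c+2b^{4}\rho'd\}$ together with an extra contribution $\kappa A$ to the $\bi\bj$-piece. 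After the cancellation of a pair of $2\kappa b^{2}\rho'\nu(c+b^{2}d)$-terms, careful bookkeeping delivers precisely the stated $X$ and $Y$.

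For the second step, $\bb$ is conformal with respect to $\ba$ iff $\brij=\bar\lambda(x)\baij$ for some scalar function, which is equivalent to $X=0$ and $Y=0$. I will verify these under (\ref{s14}) in turn. Taking the logarithmic derivative of $\nu = D(1-b^{2}\kappa)e^{2\rho}/b^{3}$ with respect to $b^{2}$ yields the identity
\begin{eqnarray*}
4\rho'\nu-2\nu'=\nu\left[\f{2(b^{2}\kappa)'}{1-b^{2}\kappa}+\f{3}{b^{2}}\right],
\end{eqnarray*}
and substituting this into $Y$ and combining over the common denominator $b^{2}(1-\kappa b^{2})$ shows that every term cancels identically, so $Y=0$ regardless of the precise form of $\kappa$. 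For $X=0$, using the same identity eliminates all explicit $\rho$-dependence and reduces the condition to a first-order ODE purely in $\kappa$ with coefficients depending on $c$ and $d$; a direct differentiation of $1-b^{2}\kappa = Cb^{4}\exp\!\int\f{c}{b^{2}(c+b^{2}d)}\,\ud b^{2}$ from (\ref{s14}) produces $\kappa+b^{2}\kappa'$ in closed form, and substituting it back verifies the ODE.

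The principal obstacle is the tensor bookkeeping in the first step: several families of terms carrying $\rho'$, $\kappa'$, and their mixed products must be regrouped before the announced $X$ and $Y$ become visible, and one has to watch carefully for the cancellation of the $2\kappa b^{2}\rho'\nu(c+b^{2}d)$-terms mentioned above. Once (\ref{s10}) is in hand, the conformality check is essentially a one-line verification organised around the logarithmic-derivative identity for $\nu$ together with an elementary integration for $\kappa$.
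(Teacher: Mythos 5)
Your proposal is correct and follows essentially the same route as the paper: contract (\ref{ssqrij}) with $b^j$ to get $\ri=(c+b^2d)\bi-3\si$ and $r=(c+b^2d)b^2$, substitute into (\ref{iwneingien}), rewrite $\aij=e^{-2\rho}\baij+\kappa\bi\bj$ to read off the stated coefficients $X$ and $Y$, and then handle the conformality condition via $X=Y=0$. The only (immaterial) difference is one of direction — the paper solves the equations $X=0$, $Y=0$ to produce (\ref{s14}), whereas you verify that (\ref{s14}) satisfies them, which is exactly what the "in particular" clause requires.
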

\begin{proof}
If $\b$ satisfies (\ref{ssqrij}), then
\begin{eqnarray}\label{s13}
\ri=(c+b^2 d)\bi-3\si,\qquad r=(c+b^2 d)b^{2}.
\end{eqnarray}
Plugging (\ref{ssqrij}) and (\ref{s13}) into (\ref{iwneingien}) yields (\ref{s10}).

By solving the equations $X=0$ and $Y=0$ we obtain (\ref{s14}), and the role of deformation factors asks $C>0$ and $D\neq0$.
\end{proof}

\begin{proposition}\label{meimeinignni}
$\a$ and $\b$ satisfy (\ref{ssqrij}) and (\ref{ssqsij}) with $d=0$ if and only if
\begin{eqnarray*}
\ba:=\sqrt{\a^2-(b^{-2}-b^4)\b^2},\qquad\bb:=b^3\b
\end{eqnarray*}
satisfy
\begin{eqnarray*}
\brij=\bar\tau(x)\baij,\qquad\bsij=\f{1}{\bar b^2}(\bbi\bsj-\bbj\bsi).
\end{eqnarray*}
\end{proposition}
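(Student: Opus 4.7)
My approach is to realize the map $(\a,\b)\mapsto(\ba,\bb)$ of the proposition as a $\b$-deformation with factors $\kappa(b^2)=b^{-2}-b^4$, $\rho=0$, and $\nu(b^2)=b^3$. Since $1-\kappa b^2=b^6>0$, the deformation is valid, and $\bar b^2=\f{e^{-2\rho}\nu^2 b^2}{1-\kappa b^2}=b^2$ gives $\bar b=b$. The $\ba$ and $\bb$ produced are precisely those in the proposition's statement. The two implications are then proved separately.

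\textbf{Forward direction.} Assume (\ref{ssqrij}) with $d=0$ and (\ref{ssqsij}). When $d=0$, the integrand in (\ref{s14}) reduces to $1/b^2$ with antiderivative $\ln b^2$; setting $C=D=1$ and $\rho=0$ in (\ref{s14}) reproduces exactly our $\kappa=b^{-2}-b^4$ and $\nu=b^3$. Hence Lemma \ref{equationXY} yields $\brij=\bar\tau(x)\baij$, where $\bar\tau=c/b^3$ is read off from the coefficient of $\baij$ in (\ref{s10}). The antisymmetric relation $\bsij=\f{1}{\bar b^2}(\bbi\bsj-\bbj\bsi)$ is then an immediate consequence of Lemma \ref{inv}.

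\textbf{Converse direction.} The $\b$-deformation above is invertible: its inverse (taking $(\ba,\bb)$ back to $(\a,\b)$) is itself a $\b$-deformation with factors $\tilde{\kappa}(\bar b^2)=\bar b^{-2}-\bar b^{-8}$, $\tilde{\rho}=0$, $\tilde{\nu}=\bar b^{-3}$, valid because $1-\tilde{\kappa}\bar b^2=\bar b^{-6}>0$, and reproducing $\b=\bar b^{-3}\bb$, $\a^2=\ba^2+(\bar b^{-8}-\bar b^{-2})\bb^2$. Applying Proposition \ref{relationunderdeformationssg} to this inverse deformation, I substitute $\brij=\bar\tau\baij$ (whence $\bri=\bar\tau\bbi$ and $\bar{r}=\bar\tau\bar b^2$) together with the prescribed $\bsij$; then using $\baij=\aij+\tilde{\kappa}\bbi\bbj$ to transfer to the basis $\{\aij,\bbi\bbj,\bbi\bsj+\bbj\bsi\}$, direct simplification shows the $\bbi\bbj$ coefficient cancels identically. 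The inverse of the $\bsi$ formula in Lemma \ref{inv}'s proof gives $\si=-\f{1}{2}\bsi$, so $\bbi\bsj+\bbj\bsi=-2\bar b^3(\bi\sj+\bj\si)$, and the surviving terms reorganize into $\rij=c(x)\aij-\f{3}{b^2}(\bi\sj+\bj\si)$ with $c=\bar b^3\bar\tau$, which is (\ref{ssqrij}) with $d=0$. The antisymmetric part follows from identity (\ref{s5}) in Lemma \ref{inv}'s proof, which reads $\nu\{\sij-\f{1}{b^2}(\bi\sj-\bj\si)\}=\bsij-\f{1}{\bar b^2}(\bbi\bsj-\bbj\bsi)=0$, forcing (\ref{ssqsij}).

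\textbf{Main obstacle.} The most delicate step is the coefficient bookkeeping in the converse direction; the pivotal cancellation of the $\bbi\bbj$ term in $\rij$ is forced by the algebraic identity $\tilde{\kappa}+\bar b^2\tilde{\kappa}'=3\bar b^{-8}$ (the inverse analogue of $\kappa+b^2\kappa'=-3b^4$ used in the forward direction). This harmony is exactly what ensures the proposition is a true equivalence rather than a one-way implication.
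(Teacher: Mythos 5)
Your proof is correct and follows essentially the same route as the paper: the forward direction is exactly the paper's argument (specialize (\ref{s14}) with $C=D=1$, $\rho=0$ to get $\kappa=b^{-2}-b^4$, $\nu=b^3$, then invoke Lemma \ref{equationXY} and Lemma \ref{inv}), and your converse is just an explicit unpacking of the paper's one-line remark that ``the deformations used here is reversible.'' Your detailed verification of the inverse deformation --- including the identity $\tilde{\kappa}+\bar b^2\tilde{\kappa}'=3\bar b^{-8}$ forcing the cancellation of the $\bbi\bbj$ term and the relation $\si=-\f{1}{2}\bsi$ --- checks out and usefully substantiates a step the paper leaves implicit.
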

\begin{proof}
Taking $C=1$, $D=1$ and $\rho=0$ in (\ref{s14}) yields $\kappa=b^{-2}-b^4$ and $\nu=b^3$. So $\ba$ and $\bb$ satisfy $\brij=\bar\tau\baij$ due to Lemma \ref{equationXY}. And $\bsij=\f{1}{\bar b^2}(\bbi\bsj-\bbj\bsi)$ due to Lemma \ref{inv}. The sufficiency holds since the deformations used here is reversible.
\end{proof}

\section{Case 2:~$c+b^2d=0$}
The case $c+b^2d=0$ is special, because it has some invariance under $\b$-deformations. As a deduction of Lemma \ref{Le}, we have the below fact.
\begin{lemma}\label{iemimaubgeg}
If $\a$ and $\b$ satisfy (\ref{ssqrij}) with $c+b^2d=0$, which means
\begin{eqnarray}\label{s17}
\rij=-d(x)(b^2\aij-\bi\bj)-\f{3}{b^2}(\bi\sj+\bj\si),
\end{eqnarray}
then $\ba$ and $\bb$ satisfies
\begin{eqnarray*}
\brij=-d(x)e^{-\f{1}{2}\rho}(\bar b^2\baij-\bbi\bbj)-\f{3}{\bar b^2}(\bbi\bsj+\bbj\bsi)
\end{eqnarray*}
after $\b$-deformations satisfying (\ref{specialdeformforssq}).
\end{lemma}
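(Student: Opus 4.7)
The plan is to specialize Lemma \ref{Le} to the case $c+b^2d=0$ and observe that the resulting expression naturally reorganizes into the claimed form. Since the $\b$-deformations in question satisfy (\ref{specialdeformforssq}), Lemma \ref{Le} immediately gives
\begin{eqnarray*}
\brij=\bar c(x)\baij+\bar d(x)\bbi\bbj-\f{3}{\bar b^2}(\bbi\bsj+\bbj\bsi),
\end{eqnarray*}
with $\bar c$ and $\bar d$ expressed in terms of $c$, $d$, $\rho$ and $\rho'$. The $\bsij$-type term is already in the desired form, so only the first two terms on the right-hand side need to be reworked.

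Next I would substitute $c=-b^2d$ into the formulas
\begin{eqnarray*}
\bar c=e^{-\frac{3}{2}\rho}\left\{(1+2b^2\rho')c+2b^4\rho'd\right\},\qquad \bar d=-e^{-\frac{1}{2}\rho}\left\{3\rho'c-(1-3b^2\rho')d\right\},
\end{eqnarray*}
and observe that the $\rho'$-terms cancel cleanly in both expressions. This yields the compact identities
\begin{eqnarray*}
\bar c=-b^2 d\, e^{-\frac{3}{2}\rho},\qquad \bar d=d\, e^{-\frac{1}{2}\rho}.
\end{eqnarray*}
This miraculous cancellation of $\rho'$ is the whole content of the lemma: it is the mechanism by which the branch $c+b^2d=0$ is preserved under the allowable deformations.

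Finally, I would invoke (\ref{s4}) specialized to $\kappa=0$ and $\nu=e^{\frac{1}{2}\rho}$, which yields $\bar b^2=e^{-\rho}b^2$. This rewrites $\bar c=-\bar b^2\, d\, e^{-\frac{1}{2}\rho}=-\bar b^2\bar d$, hence
\begin{eqnarray*}
\bar c\baij+\bar d\bbi\bbj=-d(x)e^{-\frac{1}{2}\rho}(\bar b^2\baij-\bbi\bbj),
\end{eqnarray*}
which is exactly the leading term in the claimed identity for $\brij$. I do not foresee any genuine obstacle; the proof is essentially a direct substitution, and the only point of substance is recognizing the proportionality $\bar c=-\bar b^2\bar d$ that emerges, which is precisely the algebraic signature of the $c+b^2d=0$ case.
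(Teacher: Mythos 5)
Your proof is correct and follows essentially the route the paper itself announces (``as a deduction of Lemma \ref{Le}''): substituting $c=-b^2d$ into the formulas for $\bar c$ and $\bar d$ does make the $\rho'$-terms cancel, giving $\bar c=-b^2de^{-\frac32\rho}$ and $\bar d=de^{-\frac12\rho}$, and combining this with $\bar b^2=e^{-\rho}b^2$ from (\ref{s4}) yields exactly the claimed identity. The paper's own (suppressed) computation instead runs the general deformation formulas of Lemma \ref{equationXY} directly, but the content is the same.
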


According to Lemma \ref{equationXY}, we can see that
\begin{eqnarray*}
\brij=-d\nu^{-1}(\bar b^2\baij-\bbi\bbj)+Y(\bi\sj+\bj\si)
\end{eqnarray*}
when $c+b^2d=0$. Hence, it is impossible to make $\bb$ to be conformal with respect to $\ba$. The only thing we can do is to dispel the item $\bi\sj+\bj\si$ by taking
\begin{eqnarray}\label{iemigneninl}
\nu=Db^{-3}(1-b^2\kappa)e^{2\rho}.
\end{eqnarray}

\begin{proposition}\label{iwienignubhd}
$\a$ and $\b$ satisfy (\ref{ssqrij}) and (\ref{ssqsij}) with $c+b^2d=0$ if and only if
\begin{eqnarray*}
\ba:=b^3\a,\qquad\bb:=b^3\b
\end{eqnarray*}
satisfy
\begin{eqnarray*}
\brij=\bar\tau(x)(\bar b^2\baij-\bbi\bbj),\qquad\bsij=\f{1}{\bar b^2}(\bbi\bsj-\bbj\bsi).
\end{eqnarray*}
\end{proposition}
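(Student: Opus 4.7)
The plan is to realise the claimed correspondence via a specific $\b$-deformation and then read off the two identities from Proposition \ref{relationunderdeformationssg}. To produce $\ba=b^3\a$ and $\bb=b^3\b$ I would take $\kappa=0$, $\rho=\f{3}{2}\ln b^2$ (so $e^\rho=b^3$) and $\nu=b^3$. This choice lies in the family (\ref{iemigneninl}) with $D=1$, so it is admissible for the $c+b^2d=0$ discussion. Equation (\ref{s4}) then yields $\bar b^2=e^{-2\rho}\nu^2 b^2=b^2$, hence $\bar b=b$; this equality is essential for the right-hand sides of the target formulas to carry the correct powers of $\bar b$.

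The skew identity is immediate: since $\a$ and $\b$ satisfy (\ref{ssqsij}), Lemma \ref{inv} gives $\bsij=\f{1}{\bar b^2}(\bbi\bsj-\bbj\bsi)$ under any $\b$-deformation, hence in particular under ours. For the symmetric identity, the hypothesis $c+b^2d=0$ combined with (\ref{s13}) yields $r=0$ and $\ri+\si=-2\si$. Substituting these, together with $\kappa=\kappa'=0$, $\rho'=\f{3}{2b^2}$, $\nu=b^3$ and $\nu'=\f{3}{2}b$, into (\ref{iwneingien}) collapses the formula to
\begin{eqnarray*}
\brij & = & \nu\rij+(4\rho'\nu-2\nu')(\bi\sj+\bj\si)=b^3\rij+3b(\bi\sj+\bj\si).
\end{eqnarray*}
Rewriting $\rij$ via (\ref{s17}) shows that the $\bi\sj+\bj\si$ contributions cancel exactly, leaving $\brij=bc(b^2\aij-\bi\bj)$. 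Converting back via $\aij=\bar b^{-6}\baij$ and $\bi=\bar b^{-3}\bbi$ produces $\brij=\bar\tau(\bar b^2\baij-\bbi\bbj)$ with $\bar\tau=c/\bar b^5$, which is a scalar function of $x$ as required.

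For the converse I would invoke reversibility: the inverse $\b$-deformation $\a=\bar b^{-3}\ba$, $\b=\bar b^{-3}\bb$ is again an admissible deformation and preserves $b=\bar b$, so applying Proposition \ref{relationunderdeformationssg} to it recovers (\ref{s17}) and (\ref{ssqsij}). The main subtlety, which I expect to be the only tightly constrained piece of the computation, is the simultaneous cancellation that eliminates $\bi\sj+\bj\si$ while retaining the right multiple of $b^2\aij-\bi\bj$; it is the matching of coefficients $4\rho'\nu-2\nu'=3b$ against the $-3/b^2$ in (\ref{s17}) that rigidifies the exponent $3$ in $\nu=b^3=e^\rho$ and thereby produces exactly the form (\ref{eimiignianieng}).
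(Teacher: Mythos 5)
Your proof is correct and takes essentially the same route as the paper: the deformation $\kappa=0$, $e^{\rho}=\nu=b^3$ (which satisfies (\ref{iemigneninl}) with $D=1$), Lemma \ref{inv} for the skew-symmetric identity, and reversibility of the deformation for the converse. The only cosmetic difference is that you verify the vanishing of the $\bi\sj+\bj\si$ term by substituting directly into (\ref{iwneingien}) rather than citing Lemma \ref{equationXY}, and your explicit $\bar\tau=c/\bar b^{5}=-d/b^{3}$ agrees with the paper's coefficient $-d\nu^{-1}$.
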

\begin{proof}
It is obvious that $\kappa=0$, $\rho=\ln b^3$ and $\nu=b^3$ satisfy (\ref{iemigneninl}). So $\ba$ and $\bb$ satisfy $\brij=\bar\tau(\bar b^2\baij-\bbi\bbj)$, and $\bsij=\f{1}{\bar b^2}(\bbi\bsj-\bbj\bsi)$ due to Lemma \ref{inv}. The sufficiency holds since the deformation used here is reversible.
\end{proof}

\section{Required data and examples}
\begin{theorem}\label{eipomwming}
If $\a$ and $\b$ satisfy
\begin{eqnarray}\label{iiiiiweing}
\rij=\tau(x)\aij,\qquad\sij=\f{1}{b^2}(\bi\sj-\bj\si),
\end{eqnarray}
then $\ba$ and $\bb$ satisfy the same property after $\b$-deformations, i.e.,
\begin{eqnarray}\label{imeiigninissaas}
\brij=\bar\tau(x)\baij,\qquad\bsij=\f{1}{\bar b^2}(\bbi\bsj-\bbj\bsi),
\end{eqnarray}
if and only if
\begin{enumerate}[(a)]
\item when $\b$ is Killing but not closed,
\begin{eqnarray}\label{omiiiengbiba}
\nu=C(1-b^2\kappa)e^{2\rho},
\end{eqnarray}
in which $C\neq0$ is a constant. In this case, $\bb$ must be Killing, and it is not closed unless $(1-b^2\kappa)e^{2\rho}=\f{D}{b^2}$ for some positive constant $D$;
\item when $\b$ is not Killing but closed,
\begin{eqnarray*}
\nu=C\sqrt{1-b^2\kappa}e^{2\rho},
\end{eqnarray*}
in which $C\neq0$ is a constant. In this case, $\bb$ must be closed, and it is not Killing unless $e^{2\rho}=\f{D}{b^2}$ for some positive constant $D$;
\item when $\b$ is neither Killing nor closed,
\begin{eqnarray*}
\kappa=\f{C}{b^2},\qquad\nu=De^{2\rho},
\end{eqnarray*}
in which $C<1$ and $D\neq0$ are constants. In this case, $\bb$ is neither Killing nor closed unless $e^{2\rho}=\f{E}{b^2}$ for some positive constant $E$.
\end{enumerate}
\end{theorem}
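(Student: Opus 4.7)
My plan is to substitute the hypotheses $\rij=\tau\aij$ and $\sij=\f{1}{b^2}(\bi\sj-\bj\si)$ into the transformation formulas (\ref{iwneingien}) and (\ref{iwneingienn}) of Proposition \ref{relationunderdeformationssg}, and then match the resulting expression for $\brij$ against the desired form $\bar\tau\baij$ to read off constraints on $\kappa,\rho,\nu$. Using $\ri=\tau\bi$ and $r=\tau b^2$, the right-hand side of (\ref{iwneingien}) collapses to a linear combination of the three symmetric tensors $\aij$, $\bi\bj$ and $\bi\sj+\bj\si$; because $\sij$ is skew we have $\si\bI=0$, so these three slots are genuinely independent whenever $\si\ne0$. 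On the other hand $\bar\tau\baij=\bar\tau e^{2\rho}(\aij-\kappa\bi\bj)$ has no $\bi\sj+\bj\si$-component, and its $\bi\bj$-coefficient equals $-\kappa$ times its $\aij$-coefficient. Matching therefore produces two scalar identities in $\kappa,\rho,\nu$, which I label $A=0$ (coefficient of $\bi\sj+\bj\si$) and $B=0$ (ratio of the $\aij$- and $\bi\bj$-coefficients). The skew equation on $\bsij$ contributes no new condition, since Lemma \ref{inv} already gives $\bsij=\f{1}{\bar b^2}(\bbi\bsj-\bbj\bsi)$ under the standing hypothesis on $\sij$ for every $\b$-deformation.

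Next I split into the three cases. In case~(a), where $\tau=0$ and $\si\ne0$, every term of $\brij$ carrying a factor of $\tau$ vanishes automatically, so only $A=0$ is non-trivial. Setting $u:=1-\kappa b^2$, the relation $A=0$ rearranges cleanly to $(\nu/u)'=2\rho'(\nu/u)$, which integrates at once to $\nu=C(1-\kappa b^2)e^{2\rho}$. In case~(b), where $\tau\ne0$ and $\si=0$, the $\bi\sj+\bj\si$ tensor is absent, only $B=0$ matters, and after division by $\tau$ it rearranges to $(\ln\nu-\tfrac{1}{2}\ln u)'=2\rho'$, giving $\nu=C\sqrt{1-\kappa b^2}e^{2\rho}$. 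In case~(c), where $\tau\ne0$ and $\si\ne0$, both $A=0$ and $B=0$ must hold simultaneously; comparing the two integrated forms forces $1-\kappa b^2$ to be a positive constant, hence $\kappa=C/b^2$ with $C<1$ and $\nu=De^{2\rho}$.

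The ``unless'' statements follow from the explicit formulas $\bar\tau=\f{\nu\tau(1+2\rho'b^2)e^{-2\rho}}{1-\kappa b^2}$ (obtained by reading off the $\aij$-coefficient of $\brij$) and $\bsi=\f{\nu e^{-2\rho}}{1-\kappa b^2}\{(\nu+b^2\nu')\si+\nu'(b^2\ri-r\bi)\}$, the latter appearing in the proof of Lemma \ref{inv}. For example in case~(a), where $\ri=r=0$, one has $\bar\tau\equiv0$ and $\bsi\propto(\nu+b^2\nu')\si$, so $\bar\b$ becomes closed exactly when $\nu+b^2\nu'=0$, i.e.\ $\nu=E/b^2$; substituting $\nu=C(1-\kappa b^2)e^{2\rho}$ yields the exceptional profile $(1-\kappa b^2)e^{2\rho}=D/b^2$. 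In case~(b) the analogous computation gives $\bar\tau=0$ iff $1+2\rho'b^2=0$, i.e.\ $e^{2\rho}=D/b^2$, and in case~(c) both $\bar\tau=0$ and $\bsi=0$ reduce to the single condition $e^{2\rho}=E/b^2$. Sufficiency is routine: given the prescribed deformation factors, one substitutes back into (\ref{iwneingien}) and (\ref{iwneingienn}) and reads off (\ref{imeiigninissaas}).

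The main obstacle is organizational rather than technical. Each of $A=0$ and $B=0$ is a first-order separable ODE with an elementary closed-form solution, and the three cases are forced mechanically by the linear independence of the three tensor slots. What requires care is verifying that independence (which rests on $\si\bI=0$) and tracking, case by case, exactly which degenerate profile of $(\kappa,\rho,\nu)$ makes $\bar\b$ change type.
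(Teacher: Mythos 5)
Your proposal is correct and follows essentially the same route as the paper: substitute (\ref{iiiiiweing}) into Proposition \ref{relationunderdeformationssg}, decompose $\brij$ along the independent slots $\baij$, $\bi\bj$ and $\bi\sj+\bj\si$, and solve the resulting first-order separable ODEs in $b^2$, with Lemma \ref{inv} disposing of the skew-symmetric part. The only difference is organizational: the paper works out case (a) in detail, cites \cite{yct-dmab} for case (b) and obtains (c) by combining the two, whereas you treat all three cases directly and make explicit (via $\si b^i=0$) the linear independence of the tensor slots that the paper leaves implicit.
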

\begin{proof}
It is easy to see that Case (a) and Case (b) imply Case (c). On the other hand, Case (b) had been prove in \cite{yct-dmab}. So we just need to prove Case (a) here.

Assume (\ref{iiiiiweing}) holds. By Proposition \ref{relationunderdeformationssg} we have
\begin{eqnarray*}
\brij=\f{\tau e^{-2\rho}(1+2b^2\rho')\nu}{1-b^2\kappa}\baij+\tau A\bi\bj+B(\bi\sj+\bj\si),
\end{eqnarray*}
where
\begin{eqnarray*}
A=\f{(\kappa+b^2\kappa')\nu}{1-b^2\kappa}-4\rho'\nu+2\nu',\qquad B=\f{(\kappa+b^2\kappa')\nu}{1-b^2\kappa}-2\rho'\nu+\nu'.
\end{eqnarray*}

If $\b$ is Killing but not closed, which means that $\tau=0$ and $\so\neq0$, then by solving the equation $B=0$ we obtain (\ref{omiiiengbiba}). By (\ref{iwneingienn}) we can see that
\begin{eqnarray*}
\bsij=\f{C}{b^2}e^{2\rho}\left\{1-2b^2\kappa-b^4\kappa'-2b^2(1-b^2\kappa)\rho'\right\}(\bi\sj-\bj\si)
\end{eqnarray*}
in this case. Hence, $\bb$ is not closed unless $1-2b^2\kappa-b^4\kappa'-2b^2(1-b^2\kappa)\rho'=0$, which implies $(1-b^2\kappa)e^{2\rho}=\f{D}{b^2}$.
\end{proof}
It should be make attention that, when  (\ref{omiiiengbiba}) holds, then the condition $(1-b^2\kappa)e^{2\rho}=\f{D}{b^2}$ indicates $\bb$ has constant length, and vice versa. In another word, $\bb$ is Killing but not closed as long as $\bar b$ is not a constant. Case (b) and (c) are similar.

\begin{theorem}\label{eiiweggga}
If $\a$ and $\b$ satisfy (\ref{iiiiiweing}), then $\ba$ and $\bb$ satisfy
\begin{eqnarray}\label{fwemaimieng}
\brij=\bar\tau(x)(\bar b^2\baij-\bbi\bbj),\qquad\bsij=\f{1}{\bar b^2}(\bbi\bsj-\bbj\bsi),
\end{eqnarray}
after $\b$-deformations if and only if
\begin{enumerate}[(a)]
\item when $\b$ is Killing but not closed,
\begin{eqnarray*}
\nu=C(1-b^2\kappa)e^{2\rho},
\end{eqnarray*}
in which $C\neq0$ is a constant. In this case, $\bb$ must be Killing, and it is not closed unless $(1-b^2\kappa)e^{2\rho}=\f{D}{b^2}$ for some positive constant $D$;
\item when $\b$ is not Killing but closed,
\begin{eqnarray}\label{omiiiengbibb}
\nu=\f{C\sqrt{1-b^2\kappa}e^{\rho}}{b},
\end{eqnarray}
in which $C\neq0$ is a constant. In this case, $\bb$ must be closed and of constant length, and it is not parallel unless $e^{2\rho}=\f{D}{b^2}$ for some positive constant $D$;
\item when $\b$ is neither Killing nor closed,
\begin{eqnarray*}
\rho=-\f{1}{2}\ln(1-b^2\kappa)-\f{1}{2}\ln b^2+C,\qquad\nu=\f{D}{b^2},
\end{eqnarray*}
in which $C$ and $D\neq0$ are constants. In this case, $\bb$ is must be closed and of constant length, and it is not parallel unless $\kappa=\f{E}{b^2}$ for some constant $E<1$.
\end{enumerate}
\end{theorem}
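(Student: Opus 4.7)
My plan is to parallel the strategy of Theorem~\ref{eipomwming}: substitute the hypothesis (\ref{iiiiiweing}) into the formula (\ref{iwneingien}) of Proposition~\ref{relationunderdeformationssg} and match the resulting $\brij$ against the target $\bar\tau(\bar b^2\baij-\bbi\bbj)$. Under (\ref{iiiiiweing}) one has $\ri=\tau\bi$, $r=\tau b^2$, and $b^i\si=0$ (from the antisymmetry of $\sij$), so the last curly bracket in (\ref{iwneingien}) becomes $2\tau\bi\bj+(\bi\sj+\bj\si)$, and $\brij$ reduces to a linear combination of the three tensors $\aij$, $\bi\bj$, and $\bi\sj+\bj\si$. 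A direct calculation from $\baij=e^{2\rho}(\aij-\kappa\bi\bj)$ and $\bbi=\nu\bi$ yields
\begin{equation*}
\bar b^2\baij-\bbi\bbj=\frac{\nu^2}{1-b^2\kappa}\bigl(b^2\aij-\bi\bj\bigr),
\end{equation*}
which has no $\bi\sj+\bj\si$ component, so matching the two sides is equivalent to three scalar conditions. The $\aij$-coefficient forces $\bar\tau=\tau(1+2\rho'b^2)/(\nu b^2)$, while the $\bi\bj$- and $\bi\sj+\bj\si$-coefficients yield two differential constraints on $\kappa,\rho,\nu$, call them (I) and (II) respectively. The $\bsij$-half of (\ref{fwemaimieng}) is automatic by Lemma~\ref{inv}.

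Setting $f:=1-b^2\kappa$ and using $\kappa+b^2\kappa'=-f'$, (II) simplifies to $\nu'f=\nu(f'+2\rho'f)$, whose general solution is $\nu=C(1-b^2\kappa)e^{2\rho}$. Plugging the expression for $\bar\tau$ into the $\bi\bj$-equation and then using $b^4\kappa'=f-1-b^2f'$ reduces (I) to
\begin{equation*}
\nu f(1-2\rho'b^2)-\nu b^2f'+2b^2\nu'f=0,
\end{equation*}
whose general solution is $\nu=C\sqrt{1-b^2\kappa}\,e^{\rho}/b$. In Case~(a) ($\tau=0$) only (II) is active, giving (\ref{omiiiengbiba}); the $\aij$- and $\bi\bj$-coefficients automatically vanish, so $\brij=0$ and $\bb$ is Killing. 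In Case~(b) ($\si=0$) the $\bi\sj+\bj\si$-terms are absent, so only (I) is active, yielding (\ref{omiiiengbibb}); this case was already established in \cite{yct-dmab}. In Case~(c) both (I) and (II) must hold, so $(\kappa,\rho,\nu)$ lies in the intersection of the two families; equating the two formulas for $\nu$ gives $\sqrt{1-b^2\kappa}\,e^{\rho}\,b=\mathrm{const}$, which rearranges to $\rho=-\frac12\ln(1-b^2\kappa)-\frac12\ln b^2+C$, and then $\nu=D/b^2$ follows immediately.

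The supplementary assertions about when $\bb$ is additionally closed/Killing/parallel follow by inspecting $\bsi$ and $\bar\tau$. Using the formula for $\bsi$ derived in the proof of Lemma~\ref{inv} together with $b^2\ri-r\bi=0$ gives $\bsi=\f{\nu e^{-2\rho}}{1-b^2\kappa}(\nu+b^2\nu')\si$; since $\bar b^i\bsi=0$ forces $\bsij=0\iff\bsi=0$, the closedness of $\bb$ in Case~(a) reduces to $\nu+b^2\nu'=0$, i.e.\ $\nu\propto b^{-2}$, which after substituting $\nu=C(1-b^2\kappa)e^{2\rho}$ becomes $(1-b^2\kappa)e^{2\rho}=D/b^2$. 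Parallelism of $\bb$ in Cases~(b) and~(c) similarly reduces to $\bar\tau=0$, i.e.\ $1+2\rho'b^2=0$, giving $e^{2\rho}=D/b^2$ in Case~(b) and, combined with the Case~(c) relation, $\kappa=E/b^2$ in Case~(c). The main technical hurdle is the simplification of ODE~(I) into a form whose general solution can be read off; without the substitution $f=1-b^2\kappa$ and the identity $b^4\kappa'=f-1-b^2f'$, the equation is unwieldy, and verifying that its general solution is exactly $\nu=C\sqrt{1-b^2\kappa}\,e^{\rho}/b$ requires a careful but unenlightening cancellation check.
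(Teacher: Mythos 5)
Your proposal is correct and follows essentially the same route as the paper: substitute (\ref{iiiiiweing}) into Proposition \ref{relationunderdeformationssg}, isolate the residual $\bi\bj$ and $\bi\sj+\bj\si$ coefficients (the paper's $A$ and $B$, your ODEs (I) and (II)), and solve $A=0$ for Case (b), $B=0$ for Case (a), and their intersection for Case (c), with the ``unless'' clauses read off from $\bsi$ and $\bar\tau$. Your simplification of (I) via $f=1-b^2\kappa$ reproduces exactly the paper's solution (\ref{omiiiengbibb}), and the constancy of $\bar b^2=C^2$ in Case (b) follows at once from (\ref{s4}), so nothing essential is missing.
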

\begin{proof}
It is easy to see that Case (a) and Case (b) imply Case (c). On the other hand, Case (a) is just the Case (a) in Theorem \ref{eipomwming}. So we just need to proof Case (b) here.

Assume (\ref{iiiiiweing}) holds. By Proposition \ref{relationunderdeformationssg} we have
\begin{eqnarray*}
\brij=\f{\tau (1+2b^2\rho')}{b^2\nu}(\bar b^2\baij-\bbi\bbj)+\tau A\bi\bj+B(\bi\sj+\bj\si),
\end{eqnarray*}
where
\begin{eqnarray*}
A=\f{(1+b^4\kappa')\nu}{b^2(1-b^2\kappa)}-2\rho'\nu+2\nu',\qquad B=\f{(\kappa+b^2\kappa')\nu}{1-b^2\kappa}-2\rho'\nu+\nu'.
\end{eqnarray*}

If $\b$ is not Killing but closed, which means that $\tau\neq0$ and $\so=0$, then by solving the equation $A=0$ we obtain (\ref{omiiiengbibb}). In this case,
\begin{eqnarray*}
\bar b^2=\f{\nu^2e^{-2\rho}}{1-b^2\kappa}b^2=C^2,
\end{eqnarray*}
so $\bar b$ must be a constant.

Finally, it is obviously that $\brij\neq0$ unless $1+2b^2\rho'=0$, which implies $e^{2\rho}=\f{D}{b^2}$.
\end{proof}

\begin{example}\label{oubbfhe2}
$\a=|y|$ and $\b=\xy$ satisfy (\ref{iiiiiweing}) with $\sij=0$. By Theorem \ref{eipomwming} (b) we know that
\begin{eqnarray}\label{pwieimign}
\ba=e^\rho\sqrt{|y|^2-\kappa\xy^2},\qquad\bb=C\sqrt{1-\kappa\xx}e^{2\rho}\xy
\end{eqnarray}
satisfy (\ref{imeiigninissaas}) with $\bsij=0$, where $\kappa=\kappa(\xx)$ and $\rho=\rho(\xx)$ are two arbitrary functions with an additional condition $1-\kappa\xx>0$. Moreover, $\bar b^2=C^2e^{2\rho}\xx$. As a result, one can obtain many Douglas metrics by (\ref{iemiingiiiwa}). However, since $\bb$ is closed, there is a simper way to obtain them. According to Theorem \ref{characterizationforssqD}, when $\b$ is closed and conformal, then $F=\ssq$ is a Douglas metric. That is to say, any excess deformations is not necessary in this case.  Hence, $F=\frac{(\bar b\ba+\bb)^2}{\ba}$ is a Douglas metric when $\ba$ and $\bb$ are given by (\ref{pwieimign}). In particular, take $C=1$ and $\kappa=\frac{\mu}{1+\mu\xx}$, then the following singular square metrics
\begin{eqnarray*}
F=\frac{\left(|x|\sqrt{(1+\mu\xx)\yy-\mu\xy^2}+\xy\right)^2}{\sqrt{1+\mu\xx}\cdot\sqrt{(1+\mu\xx)\yy-\mu\xy^2}}
\end{eqnarray*}
are Douglas metrics.
\end{example}

\begin{example}
$\a=|y|$ and $\b=\xy$ satisfy (\ref{iiiiiweing}) with $\sij=0$. By Theorem \ref{eiiweggga} (b) we know that
\begin{eqnarray}\label{pwieimignbb}
\ba=e^\rho\sqrt{|y|^2-\kappa\xy^2},\qquad\bb=C\sqrt{1-\kappa\xx}e^{\rho}\f{\xy}{|x|}
\end{eqnarray}
satisfy (\ref{fwemaimieng}) with $\bsij=0$. Moreover, $\bar b^2=C^2$. By  (\ref{eimiignianieng}) we can obtain many Douglas metrics. In particular, take $C=1$, $\kappa=\frac{\mu}{1+\mu\xx}$ and $\rho=0$, then the following singular square metrics
\begin{eqnarray*}
F=\frac{\left(|x|\sqrt{(1+\mu\xx)\yy-\mu\xy^2}+\xy\right)^2}{\xx\sqrt{1+\mu\xx}\cdot\sqrt{(1+\mu\xx)\yy-\mu\xy^2}}
\end{eqnarray*}
are Douglas metrics.
\end{example}

\begin{example}\label{owmienina}
$\a=|y|$ and $\b=x^2y^1-x^1y^2$ satisfy (\ref{iiiiiweing}) with $\rij=0$. By Theorem \ref{eiiweggga} (a) we know that
\begin{eqnarray*}
\ba=e^\rho\sqrt{|y|^2-\kappa\cdot(x^2y^1-x^1y^2)^2},\qquad\bb=C\{1-\kappa\cdot[(x^1)^2+(x^2)^2]\}e^{2\rho}(x^2y^1-x^1y^2)
\end{eqnarray*}
satisfy (\ref{fwemaimieng}) with $\bar\tau=0$, where $\kappa=\kappa(t)$ and $\rho=\rho(t)$ are two arbitrary functions with an additional condition $1-\kappa t>0$ in which $t:=(x^1)^2+(x^2)^2$. Moreover, $\bar b^2=C^2(1-\kappa t)e^{2\rho}t$. By (\ref{eimiignianieng}) we can obtain many Douglas metrics. In particular, Take $\kappa=0$, $\rho=0$ and $\nu=1$, then the following singular square metric
\begin{eqnarray*}
F=\f{\left(\sqrt{(x^1)^2+(x^2)^2}|y|+x^2y^1-x^1y^2\right)^2}{\sqrt{[(x^1)^2+(x^2)^2]^3}|y|}.
\end{eqnarray*}
is a Douglas metric.
\end{example}

(\ref{iemiingiiiwa}) does not been used to construct any example above. As we have mentioned in Example \ref{oubbfhe2} and Example \ref{owmienina}, if $\b$ is closed and conformal, we can use the original expression $F=\ssq$ directly, and if $\b$ is Killing, then we can use the expression (\ref{eimiignianieng}). Both of them are simpler than (\ref{iemiingiiiwa}). However, (\ref{iemiingiiiwa}) is useful when $\b$ satisfies (\ref{specialoneformssqDa}) but neither closed nor Killing, although we are not sure that whether such $1$-forms exist or not. Finally, we would like to point out a fact without argument to end this paper that there is not any $1$-form satisfy (\ref{specialoneformssqDa}) which is neither closed nor Killing when $\a$ is flat.

\noindent Changtao Yu\\
School of Mathematical Sciences, South China Normal
University, Guangzhou, 510631, P.R. China\\
aizhenli@gmail.com
\newline
\newline
\newline
\noindent Hongmei Zhu\\
College of Mathematics and Information Science, Henan Normal University, Xinxiang, 453007, P.R. China\\
zhm403@163.com
\end{document}